\newtheorem{thm}{Theorem}[section]
\newtheorem{lem}[thm]{Lemma}
\theoremstyle{remark}
\newtheorem{rem}{Remark}[section]
\numberwithin{equation}{section}
\newcommand{\be}{\begin{equation}}
\newcommand{\ee}{\end{equation}}
\newcommand{\bea}{\begin{eqnarray}}
\newcommand{\eea}{\end{eqnarray}}
\newcommand{\Bea}{\begin{eqnarray*}}
\newcommand{\Eea}{\end{eqnarray*}}
\def\CL{{\mathcal L}}
\def\C{{\mathbb C}}
\def\H{{\mathbb H}}
\def\N{{\mathbb N}}
\def\R{{\mathbb R}}
\def\Z{{\mathbb Z}}
\def\T{{\mathbb T}}
\def\1{\text{\bf {1}}}
\begin{document}

\title[Localisation of Spectral Sums]
{Localisation of Spectral Sums corresponding to the sub-Laplacian on the Heisenberg Group}

\author{Rahul Garg}
\address{Department of Mathematics, Indian Institute of Science Education and Research Bhopal, India.} 
\email{rahulgarg@iiserb.ac.in}

\author{K. Jotsaroop}
\address{Department of Mathematics, Indian Institute of Science Education and Research Mohali, India.} 
\email{jotsaroop@iisermohali.ac.in}

\subjclass[2010]{Primary 43A50, 43A80; Secondary 26D10, 42B10, 46E35}


\dedicatory{Dedicated to Prof. Sundaram Thangavelu on the occasion of his 60th birthday.}

\keywords{Heisenberg Group, sub-Laplacian, Bochner Riesz means, Hardy-Sobolev inequality, interpolation}

\begin{abstract}
In this article we study localisation of spectral sums $\{S_R\}_{R > 0}$ associated to the sub-Laplacian $\CL$ on the Heisenberg Group $\H^d$ where $S_Rf := \int_0^R dE_{\lambda }f$, with $\CL = \int_0^{\infty} \lambda \, dE_{\lambda}$ being the spectral resolution of $\CL.$ We prove that for any compactly supported function $f \in L^2(\H^d)$, and for any $\gamma < \frac{1}{2}$, $R^{\gamma} S_R f  \to 0$ as $ R \to \infty$, almost everywhere off $supp (f)$. 
\end{abstract}

\maketitle

\section{Introduction}
We define the standard Laplacian on $\R^d$ as $\Delta = -\sum_{j=1}^d \frac{\partial^2}{\partial x_j^2}$. It is a densely defined positive, self-adjoint operator and it admits a spectral decomposition in $L^2(\R^d).$ Using functional calculus we define  Bochner Riesz means of order $\alpha\geq 0$ corresponding to  $\Delta$ by 
$$S_R^{\alpha} = \left(I-\frac{1}{R^2}\Delta\right)_{+}^{\alpha}.$$ 

When $\alpha=0,$ we obtain the spectral sums $\{S_R^0\}_{R > 0}$ associated to $\Delta.$ Let $f$ be a measurable function on $\R^d$ vanishing identically on an open subset $\Omega$ of $\R^d.$ We say that the localisation principle for $S_R^{\alpha}$ holds if $\lim_{R \rightarrow \infty} S_R^{\alpha}f=0$ either pointwise or uniformly over compact subsets of $\Omega$. To be more specific, when the convergence is pointwise a.e. we will refer to it as a.e. localisation principle. 

For any $d \geq 1$ and $\alpha  \geq \frac{d-1}{2},$  it is well known that $S_R^{\alpha}f$ converges to $0$ uniformly on every compact subset of $\Omega$ for any $f \in L^p(\R^d)$ with $1\leq p < \infty$ (see \cite{SB} and Chapter 7 of \cite{SW}). In higher dimensions $d \geq 2$, in the case when $\alpha=0$, A. I. Bastis \cite{AIB} and P. Sjolin \cite{PS} proved a.e. localisation principle for \emph{compactly supported} functions. Later, Bastis \cite{AIB2} established the a.e. localisation principle for all functions $ f\in L^2(\R^d)$. Around the same time, A. Carbery and F. Soria \cite{CS} proved that the a.e. localisation principle is indeed true for functions $f\in L^p(\R^d)$ with $2\leq p<\frac{2d}{d-1}$. Using dilation and translation, and thus assuming without loss of generality that $f$ is identically $0$ in the open ball $\{|x| < 3\}$, Carbery-Soria  \cite{CS} proved their result on the a.e. localisation principle by showing that for any $\beta <1$, the following weighted-norm inequality holds:
\begin{equation}
\label{0.1}\int_{|x|\leq 1}\sup_{R>1}|S_R^0 f(x)|^2 \, dx \leq C_\beta \int_{|x|\geq 3} \frac{|f(x)|^2}{|x|^{\beta}} \, dx. 
\end{equation} 

One could observe that the above inequality is stronger in the sense that it gives the a.e. localisation principle for a larger class of functions, as for any $2 \leq p < \frac{2d}{d-1}$, one has $L^p(\R^d\setminus \{|x| < 3\}, dx) \xhookrightarrow{} L^2(\R^d\setminus \{|x| < 3\}, |x|^{-\beta} \, dx)$ for any $\beta > d(1-2/p)$, by H\"older's inequality. 

Recently, R. Ashurov \cite{RA} proved the a.e. localisation principle for the spectral sums corresponding to the Laplacian on $d$-dimensional Torus $\T^d$ for $f \in L^2(\T^d).$ 

Let $\H^d$ be the  Heisenberg group. We consider the positive sub-Laplacian (or Kohn-Laplacian) $\CL$ on $\H^d$. It is known that $\CL$ has a self-adjoint extension on $L^2(\H^d)$. Therefore, it admits a spectral resolution of identity such that $\CL=\int_0^{\infty}\lambda dE_{\lambda}$. We define the spectral sums of $f$ corresponding to $\CL$ as $S_Rf=\int_0^R dE_{\lambda}f.$ In this article, we study a.e. localisation of these spectral sums $S_Rf.$ 

Before stating our main result, let us also introduce the Bochner Riesz means of order $\alpha\geq 0,$ corresponding to the sub-Laplacian $\mathcal{L}$, which include the spectral sums $\{S_R\}_{R > 0}$ as a special case. As we did in the case of $\Delta$ on $\R^d,$ using the functional calculus of $\CL$ we define the Bochner Riesz means of order $\alpha\geq 0$ for $\CL$ as $$S_R^{\alpha} = \left(I-\frac{1}{R^2}\CL\right)_{+}^{\alpha}.$$

Note that $\alpha=0$ corresponds to the operator $S_R$ (that is, $S_R = S_R^0$). When $\alpha>0$, D. Gorges and D. M\"uller \cite{GM} proved that $S_R^{\alpha}f$ converges a.e. to $f$ as $R \to \infty$ for all $f\in L^p(\H^d), \frac{Q-1}{Q} \left(\frac{1}{2} - \frac{\alpha}{D-1}\right) < \frac{1}{p}\leq \frac{1}{2},$ where $Q= 2d+2$ and $D= 2d+1.$
 
In this paper we are interested in establishing a.e. localisation principle for the spectral sums $\{S_R\}_{R > 0}$ associated to $\CL.$ We show that the following version of localisation principle holds for compactly supported functions in $L^2(\H^d)$. 

\begin{thm}\label{main}
Let $f \in L^2(\H^d)$ be compactly supported. Then, for any $\gamma<\frac{1}{2}$, $R^{\gamma} S_R f \to 0$ as $ R \to \infty$, almost everywhere off $supp (f)$.  
\end{thm}

We remark here that our method does not extend to functions in $L^2(\mathbb{H}^d)$ that are not necessarily compactly supported. We expect though that the above result should hold without the assumption of compact support for functions. 

Let us also mention some of the important developments on a.e. convergence of spectral sums associated to the sub-Laplacian on other groups. In \cite{MMP}, the authors proved a.e. convergence of spectral sums for the right invariant sub-Laplacian $\CL$ on a connected Lie Group. In fact, there they proved an analogue of Rademacher-Menshov theorem \cite{M,R} for general Lie groups. More precisely, they showed that $S_Rf\rightarrow f$ a.e. as $R\rightarrow\infty$  for any $f$ such that $\log(2+\CL)f\in L^2(G)$. See also \cite{LC} for a similar result for spectral sums for the  standard Laplacian $\Delta$ on $\R^d$. 

\medskip \noindent \textbf{Organisation of the paper:} We recall some preliminaries of the Heisenberg group in Section \ref{prelim}. In Section \ref{proofs}, we state and prove some sharp weighted estimates for the  spectral sums (Theorem \ref{A1}) from which the a.e. localisation principle (Theorem \ref{main}) follows immediately. We also state the key lemma (Lemma \ref{B}) in Section \ref{proofs} that is required in establishing Theorem \ref{A1}, and the same is proved in Section \ref{key-estimates}. In the proof of Lemma \ref{B}, we make use of an interpolation of certain Besov-type spaces and we prove it separately in the Appendix (Section \ref{appendix}). 

\medskip \noindent \textbf{Notation:} For $A, B >0$, the expression $A \lesssim B$ indicates that $A \leq C B$ for some $C > 0$. We write $A \lesssim_{\beta} B$ when the implicit constant $C$ may depend on $\beta$. We also use the notation $A \sim B$ if $A \lesssim B$ and $B \lesssim A$. 
The constants appearing in the inequalities may change from line to line. We only keep track of the dependence of the constants.

\section{Preliminaries}\label{prelim}
We recall basics of the Heisenberg group. 
The Heisenberg Group $\H^d$ can be identified with $\C^{d}\times\R.$ The Haar measure on $\H^d$ is given by the Lebesgue measure on $\C^d\times\R.$ 
We denote a point in $\H^d$ by $(z,t)$, where $z = (z_1, z_2, \ldots, z_d)\in\C^d;$ $t\in\R,$ and $z_j= x_j + i y_j;$ $x_j, y_j \in \R.$
It is well known that $\H^d$ is a simply connected, unimodular, two step nilpotent Lie group under the group operation 
$$(z,t)(w,s) = (z+w, t+s +\frac{1}{2} Im(z \cdot \bar{w})),$$ 
where $z \cdot \bar{w}= \sum_{j=1}^{d} z_j \bar{w_j}.$  
The convolution of functions on $\H^d$ is given by 
$$f \ast g(z,t) = \int_{\H^d} f((z,t)(w,s)^{-1}) g(w,s) \, dw \, ds.$$ 

We consider the following left-invariant vector fields on $\H^d$: 
\begin{eqnarray*} 
X_j = \frac{\partial}{\partial x_j} + \frac{1}{2} y_j \frac{\partial}{\partial t}, \quad Y_j = \frac{\partial}{\partial y_j} - \frac{1}{2} x_j \frac{\partial}{\partial t}, \quad T = \frac{\partial}{\partial t}, 
\end{eqnarray*} 
for $1 \leq j \leq d.$ These form a basis of the vector space of left invariant vector fields on $\H^d.$ In fact, the Lie algebra of left invariant vector fields on $\H^d$ is generated by taking the Lie brackets and finite linear combinations of $\{ X_j, Y_j \}_{j=1}^{d}.$ 
The operator
$$\CL := -\sum_{j=1}^d (X_j^2 + Y_j^2),$$ 
is known as the sub-Laplacian (or Kohn-Laplacian) on $\H^d.$  
It is known that $\CL$ is a densely defined positive, hypoelliptic operator and it has a self-adjoint extension on $L^2(\H^d),$ and that it also commutes with left translations.
The spectrum of $\CL$ is well known (see, for example, Section 2.1 of \cite{ST}). 

The action of $\mathcal{L}$ on functions of the form $f(z,t) = e^{i\lambda t} g(z)$, $\lambda \neq 0$, leads us to the following family of operators defined by $\mathcal{L}f(z,t) = e^{i\lambda t} L(\lambda)g(z).$ More explicitly,
$$L(\lambda) = \Delta_{\R^{2d}} + \frac{1}{4} \lambda^2 |z|^2 + i \lambda \sum_{j=1}^d \left(x_j \frac{\partial}{\partial y_j} - y_j \frac{\partial}{\partial x_j} \right).$$

These operators are called special Hermite operators and their spectral projections in $L^2(\C^d)$ are explicitly known (see, for example, Section 1.3 of \cite{ST1} and Section 1.4 of \cite{ST}). When $\lambda=0,$ $L(\lambda)$ reduces to $\Delta$ on $\R^{2d}$. 

Fix $\lambda \neq 0$. Let $\varphi_k$ denote the Laguerre functions of order $d-1.$ Then, we know that 
$$ L(\lambda) \varphi_k(\sqrt{|\lambda|}  \, \cdot) = |\lambda|(2k+d) \varphi_{k}(\sqrt{|\lambda|} \, \cdot).$$

If we write $E_{k,\lambda}(z,t)= e^{i\lambda t} \varphi_k(\sqrt{|\lambda|}z)$, then using the left invariance of $\CL,$ it follows that for $f\in \mathcal{S}(\H^d),$ the space of Schwartz class functions on $\mathbb{H}^d$, we have 
$$\CL(f* E_{k,\lambda}) = f* \CL E_{k,\lambda} = |\lambda|(2k+d)f* E_{k,\lambda}.$$ 

In terms of the spectral resolution of identity associated to $\CL$, we have that $f\in L^2(\H^d)$ can be written as 
$$f(z,t) = \frac{1}{(2\pi)^{d+1}} \sum_{k\geq 0} \int_{\R\setminus{\{0\}}} f*E_{k,\lambda}(z,t) |\lambda|^d \, d\lambda$$ 
in $L^2$-sense.
Note that $f\rightarrow f*E_{k,\lambda}+ f*E_{k,-\lambda}$ is the spectral projection corresponding to the eigenvalue $|\lambda|(2k+d).$ 

Therefore, it suffices to work on the spectrum 
$$\{|\lambda|(2k+d): \lambda \in \R\setminus \{0\}, k\in \N \cup \{0\}\}$$
as $\lambda=0$ corresponds to the set of measure zero in the spectral resolution of $\CL$ given above.

Recall that the Bochner Riesz means of order $\alpha\geq 0,$ corresponding to $\CL$ are given by
\begin{equation} \label{BR-subLap-def}
S_R^{\alpha}f := \left(I-\frac{1}{R^2}\CL \right)_{+}^{\alpha}f. 
\end{equation}

In terms of the spectral projections, the above can be expressed as 
\begin{equation} \label{BR-subLap-def-expansion} 
S_R^{\alpha}f(z,t) = \frac{1}{(2\pi)^{d+1}} \int_{\R\setminus\{0\}} \sum_{k\geq 0} \left(1 - \frac{(2k+d)|\lambda|}{R^2}\right)_{+}^{\alpha} f* E_{k,\lambda} (z,t) |\lambda|^d \, d\lambda, 
\end{equation}

For any $\lambda\in \R$, we denote by $f^\lambda$ the inverse Euclidean Fourier transform (upto a constant) of $f$ in the last variable at the point $\lambda,$ that is, $f^{\lambda}(z) = \int_{\R} f(z,t) e^{i \lambda t} \, dt$. It is easy to verify that $(f \ast g)^{\lambda} = f^{\lambda} \times_{\lambda} g^{\lambda}$, where $\times_{\lambda}$ is called the $\lambda-$twisted convolution on $\mathbb{C}^d$ and is defined by 
$$F \times_{\lambda} G (z) = \int_{\mathbb{C}^d} F(z-w)G(w) e^{i \frac{\lambda}{2}  Im(z \cdot \bar{w})} \, dw,$$
for any $F, G \in L^1(\mathbb{C}^d).$ 

For each $k \in \N$ and $\lambda\in \R\setminus \{0\}$, let us consider the operator $P_{k,\lambda}$ defined on $L^1 \cap L^2(\C^d)$ by 
\begin{align*} 
P_{k,\lambda}(F) (z) = F \times_{-\lambda} \varphi_k(\sqrt{|\lambda|} \cdot) (z) = \int_{\C^d} F(z-w)  \varphi_k(\sqrt{|\lambda|}(w)) e^{-i \frac{\lambda}{2} Im(z \cdot \bar{w})} \, dw. 
\end{align*} 

It is known (see Section $2.1$, page 53 in \cite{ST}) that each of $(2\pi)^{-d} |\lambda|^d P_{k,\lambda}$ extends to $L^2(\C^d)$ as an orthonormal projection, and for $F \in L^2(\C^d)$, we have $F = (2\pi)^{-d} \sum_{k \geq 0} |\lambda|^d P_{k,\lambda} (F)$ in $L^2$-sense. In fact, $(2\pi)^{-d} |\lambda|^d P_{k,\lambda}$ is the spectral projection of $L(\lambda)$ corresponding to the eigenvalue $(2k+d)|\lambda|$. 

Writing $e_k(z,t) = \int_{\mathbb{R}} |\lambda|^d E_{k,\lambda}(z,t) \, d \lambda = \int_{\mathbb{R}} |\lambda|^d \varphi_k(\sqrt{|\lambda|}z) e^{i \lambda t} \, d \lambda,$ one can verify that 
\begin{equation} \label{e_k}
f* E_{k,\lambda} (z,t) = (f \ast e_k)^{-\lambda} (z) e^{i \lambda t} = P_{k,\lambda}(f^{-\lambda}) (z) e^{i \lambda t} |\lambda|^{d}.
\end{equation} 

Therefore, one could also express $S_R^{\alpha}$ as follows: 
\begin{equation*} 
S_R^{\alpha}f(z,t) =  \frac{1}{(2\pi)^{d+1}} \int_{\R\setminus\{0\}}\sum_{k\geq 0}\left(1-\frac{(2k+d)|\lambda|}{R^2}\right)_{+}^{\alpha} P_{k,\lambda}(f^{-\lambda})(z) e^{i\lambda t}|\lambda|^d \, d\lambda. 
\end{equation*} 

By abuse of notation, we write $S_R^{0}$ as $S_R$. It is then clear from the above that 
\begin{equation} \label{spectralsums} 
S_R f(z,t) = \frac{1}{(2\pi)^{d+1}} \int_{\R\setminus\{0\}}\sum_{k\geq 0}\chi_{\{\sqrt{(2k+d)|\lambda|}\leq R\}} ((2k+d)|\lambda|)P_{k,\lambda}(f^{-\lambda})(z) e^{i\lambda t}|\lambda|^d \, d\lambda. 
\end{equation}

Before proceeding further, let us also recall certain functional identities which will be required in the proofs later. 

Define  $\phi_{\alpha,\beta}(z)=(\pi(z,0)\Phi_{\alpha},\Phi_{\beta}),$ the special Hermite functions on $\C^d$. Here $\{\Phi_{\alpha}\}_{\alpha\in \N^d}$ are the Hermite functions defined on $\R^d$ and $\pi$ is the Schr\"{o}dinger representation of the Heisenberg group. It is well known  that the system $\{\Phi_{\alpha}\}_{\alpha\in \N^d}$ forms an orthonormal basis of $L^2(\R^d)$. Using the properties of the Schr\"{o}dinger representation one can show that $|\lambda|^{d/2} \phi_{\alpha,\beta}(\sqrt{|\lambda|}\cdot)$ form an orthonormal basis of $L^2(\C^d)$ (see, for example, Theorem $1.3.2$ on page 16 in \cite{ST1}). We note the following identities: 
\begin{align} \label{laguerre-expansion} 
\varphi_k(\sqrt{|\lambda|}z) = (2\pi)^{d/2} \sum_{|\alpha|=k} \phi_{\alpha,\alpha}(\sqrt{|\lambda|}z), 
\end{align}
and 
\begin{align} \label{twisted}
\phi_{\mu,\nu} (\sqrt{|\lambda|} \, \cdot) \times_{\lambda} \phi_{\alpha,\beta} (\sqrt{|\lambda|} \, \cdot) = (2\pi)^{d/2} |\lambda|^{-d} \phi_{\mu,\beta} (\sqrt{|\lambda|} \, \cdot) \delta_{\nu,\alpha}. 
\end{align}

The above identities can be found in \cite{ST1} (Proposition $1.3.2$ on page 21 and identity $(1.3.42)$ on page 22). In Theorem $1.3.6$ of \cite{ST1}, the above mentioned identity \eqref{twisted} is proved for $\lambda=1$ only. For $\lambda \neq 0$, one can make a change of variables to reduce it to the case of $\lambda=1.$ 

\section{Proof of theorem \ref{main}}\label{proofs}
Let $\|\cdot \|$ be the homogeneous Cygan-Kor\'anyi norm on $\H^d$ which is given by $\| (z,t) \| = \left(|z|^4 + 16 |t|^2\right)^{1/4} = \left(\left(\sum_{j=1}^d |z_j|^2\right)^2 + 16 |t|^2\right)^{1/4}$. It is known that $\|\cdot \|$ is subadditive on $\H^d$ (see, for example, \cite{Cyg}). We write the corresponding left invariant distance function $d_K((z,t), (w,s)): = \|(z,t)^{-1}(w,s)\|$. 
We also define the non-isotropic dilations $\{\delta_r\}_{r>0}$ on $\H^d$ as $\delta_r(z,t)=(rz,r^2t),$ where $rz = (rz_1, rz_2, \ldots, rz_d)$ for $z \in \C^d.$ It is easy to verify that $\|\delta_r(z,t)\|=r\|(z,t)\|$.

We denote by $d_{CC}$ the Carnot-Carath\'eodory metric on $\H^d$, which is also referred to as the control distance. It is well known (see, for example, Proposition 5.1.4 on page 230 and Theorem 5.2.8 on page 235 in \cite{BLU}) that $d_K$ and $d_{CC}$ are equivalent, that is, there exists a constant $A>1$ such that 
$$A^{-1} d_{CC}((z,t), (w,s)) \leq d_K((z,t), (w,s)) \leq A \, d_{CC}((z,t), (w,s)).$$ 

Note that for studying localisation principle, using non-isotropic dilation and left translation, one may assume without loss of generality that $f$ is identically $0$ in the open ball $\{(z,t) : \|(z,t)\| < 3\}$.

Let $0 \leq \psi \leq 1$ be an even and smooth function on $\R$ such that $\psi \equiv 1$ on the interval $[-\frac{\epsilon_0}{2}, \frac{\epsilon_0}{2}]$ and support of $\psi$ is contained in the interval $[-\epsilon_0, \epsilon_0]$, for some $0<\epsilon_0<1$, which will be chosen later. Denote the characteristic function of the interval $[-R, R]$ on $\R$ by $\chi_R$. Then, in $L^2$-sense, we have 
$$\chi_{ R}(\sqrt{\eta}) = {\frac{2}{\pi}} \int_{0}^{\infty} \frac{\sin(R\rho)}{\rho} \cos(\sqrt{\eta}\rho) \, d\rho.$$

Using the above expression for $\chi_R$ in \eqref{BR-subLap-def} (for $\alpha = 0$) 
one can see that for any $f\in\mathcal{S}(\H^d)$ we have
\begin{align}\label{SRf-L2sum} 
S_R f(z,t) &= {\frac{2}{\pi}} \int_{0}^{\infty} \frac{\sin(R\rho)}{\rho} \cos(\rho\sqrt{\CL})f(z,t) \, d\rho \\
\nonumber &= {\frac{2}{\pi}} \int_{0}^{\infty}\psi(\rho) \frac{\sin(R\rho)}{\rho} \cos(\rho\sqrt{\CL})f(z,t) \, d\rho \\
\nonumber & \quad+ {\frac{2}{\pi}} \int_{0}^{\infty}\left(1-\psi(\rho)\right)\frac{\sin(R\rho)}{\rho}\cos(\rho\sqrt{\CL})f(z,t) \, d\rho. 
\end{align} 

Note that $\cos(\rho \sqrt{\CL}) f$ is the solution to the wave equation for the sub-Laplacian $\CL$ with initial data $f$ and initial speed $0$. Since $f$ is identically $0$ on the ball $\{(z,t): d_{CC}((z,t), e) < 3 A^{-1}\}$, where $e$ is the identity element of $\H^d$, it follows from the finite speed of propagation of the wave equation for $\CL$ on $\H^d$ (see \cite{Melrose} and Theorem $6.2$ in the appendix of \cite{Muller}), that $\cos(\rho \sqrt{\CL}) f(z,t) \equiv 0$ on $\{(z,t) : d_{CC}((z,t), e) < 3 A^{-1} - \rho \}$ for any $0 \leq \rho \leq 3 A^{-1}$. As a consequence, choosing $\epsilon_0 = A^{-1}$, we get that $\cos(\rho \sqrt{\CL}) f \equiv 0$ on $\{(z,t) : \|(z,t)\| < 2 A^{-2}\}$ for any $0 \leq \rho \leq \epsilon_0$. 

Hence, for $f \equiv 0$ on the set $\{\|(z,t)\| < 3\}$, choosing $\epsilon_0 = A^{-1}$, it suffices to study 
\begin{align}\label{BRf-def} 
B_Rf(z,t)=  \int_{0}^{\infty}\left(1-\psi(\rho)\right) \frac{\sin(R\rho)}{\rho}\cos(\rho\sqrt{\CL})f(z,t) \, d\rho,
\end{align} 
for $(z,t)$ such that $\|(z,t)\| < 2A^{-2}$.

In order to prove our result, we further break $B_R$ into simpler operators and analyse each piece separately.
Let $\Psi$ be an even and smooth function on $\R$ supported in the interval $[-2,2]$, further with the property that $0 \leq \Psi \leq 1$ and that it is identically $1$ on the interval $[-1, 1]$. Restricting $\Psi$ on $[0, \infty),$ it is easy to verify that on $[0, \infty)$ one has
$$1 = \sum_{j=1}^\infty \widetilde{\psi}_j(\rho) + \Psi(\rho),$$
where $\widetilde{\psi}_j(\rho) = \Psi(2^{-j}\rho) - \Psi(2^{-j+1}\rho)$, for $j \geq 1$. Clearly, each $\widetilde{\psi}_j \in C_c^{\infty}([0,\infty))$ with support in $[2^{j-1}, 2^{j+1}]$. From this, we have 
$$1-\psi(\rho) = \sum_{j=1}^\infty\left(1-\psi(\rho)\right) \widetilde{\psi}_j(\rho) +\left(1-\psi(\rho)\right) \Psi(\rho).$$

Therefore, for any $f\in\mathcal{S}(\H^d),$ we get
\begin{align*} 
B_Rf(z,t) &=  \sum_{j\geq 1} \int_{0}^{\infty}\left(1-\psi(\rho)\right)\widetilde{\psi}_j(\rho)\frac{\sin(R\rho)}{\rho} \cos(\rho\sqrt{\CL})f(z,t) \, d\rho \\
&\quad \quad +  \int_{0}^{\infty}\left(1-\psi(\rho)\right)\Psi(\rho) \frac{\sin(R\rho)}{\rho}\cos(\rho\sqrt{\CL})f(z,t) \, d\rho. 
\end{align*} 

Now, since $1-\psi(\rho)\equiv 1$ on $[\epsilon_0, \infty),$ and is identically $0$ on $(0,\epsilon_0/2]$, it suffices to analyse $\sum_{j\geq 1} B_{R,j} f(z,t)$, where 
$$ B_{R,j} f(z,t) = \int_{0}^{\infty}\left(1-\psi(\rho)\right)\widetilde{\psi}_j(\rho) \frac{\sin(R\rho)}{\rho}\cos(\rho\sqrt{\CL})f(z,t) \, d\rho.$$ 

In fact, we can handle 
$$\int_{0}^{\infty}\left(1-\psi(\rho)\right)\Psi(\rho) \frac{\sin(R\rho)}{\rho}\cos(\rho\sqrt{\CL})f(z,t) \, d\rho$$ 
in a similar manner since $\left(1-\psi(\rho)\right)\Psi(\rho)$ is compactly supported away from $0$. 

For each $j \geq 1,$ we define 
$$m_{R,j}(\eta) = \int_0^\infty\left(1-\psi(\rho)\right) \widetilde{\psi}_j(\rho) \frac{\sin(R\rho)}{\rho}\cos(\rho\eta) \, d\rho.$$

Using the above expression for $m_{R,j}$, we can rewrite $B_{R,j} f(z,t)$ as 
\begin{align*} 
B_{R,j} f(z,t) &= m_{R,j}(\sqrt{\CL})f (z,t) \\
&= c_d \int_{\R\setminus\{0\}}\sum_{k\geq 0}m_{R,j}\left(\sqrt{(2k+d)|\lambda|}\right) P_{k,\lambda}(f^{-\lambda})(z) e^{i\lambda t}|\lambda|^d \, d\lambda.
\end{align*}

We now study the following weighted bound estimate for $\sum_{j \geq 1} B_{R,j}f$. 

\begin{thm}\label{A1} Let $K\subset \H^d$ be compact.
For any $q<1$, and $0<\eta<1$, the following estimate  holds: 
\be \label{mainestimate} 
\int_{\|(z,t)\| < 2 A^{-2}} \sup_{R>1} R^{q} \left|\sum_{j \geq 1} B_{R,j} f(z,t) \right|^2 \, dz \, dt \lesssim_{K,q, \eta} \int_{\|(z,t)\|\geq 3}\frac{|f(z,t)|^2 }{\|(z,t)\|^{\eta}} \, dz \, dt,\ee 
for any $f\in L^2(\H^d \setminus \{\|(z,t)\|<3\}, \|(z,t)\|^{-\eta} \, dz \, dt)$ such that $\text{supp}(f)\subset K.$ 
\end{thm}

Using standard density arguments, Theorem \ref{main} follows immediately from Theorem \ref{A1}. In order to prove Theorem \ref{A1}, we need to first estimate $m_{R,j}$'s and their derivatives. 
\begin{rem}
In \cite{CS}, the authors proved an estimate similar to that in Theorem \ref{A1} above for $q=0$ without any assumption on the support of $f$ (See Theorem $2.2$ in \cite{CS}). However, for compactly supported functions, the inequality \eqref{mainestimate}  allows us to put a factor of $R^q$, for any $q<1$, in front of $S_Rf$ and we still get an a.e. localisation principle. If we restrict only to compactly supported functions in Theorem $2.2$ \cite{CS}, we would get a similar inequality as \eqref{mainestimate} above.
\end{rem}

\begin{lem}\label{A}
For any $l, k \in \N \cup \{0\}$, there exists a constant $C_{l,k} > 0$ such that 
$$\left|\frac{d^l}{dR^l}m_{R,j}(\eta)\right| \leq C_{l,k} \frac{2^{jl}}{(1+2^j| R-\eta|)^k},$$
for all $\eta\geq 0$ and $j, R\geq 1.$ 
\end{lem}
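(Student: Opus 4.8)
The plan is to analyze the oscillatory integral defining $m_{R,j}$ and exploit both the localization of $\widetilde{\psi}_j$ near $\rho \sim 2^j$ and the asymptotic behavior of the Bessel function $J_{1/2}$. The crucial simplification is that $J_{1/2}(s)/s^{1/2}$ is not merely asymptotically but \emph{exactly} an elementary oscillatory kernel: since $J_{1/2}(s) = \sqrt{2/(\pi s)}\,\sin s$, we have $R\, J_{1/2}(R\rho)/(R\rho)^{1/2} = c\, \sin(R\rho)/\rho$ for a constant $c$. Substituting this into the definition, I would write
\begin{align*}
m_{R,j}(\eta) = c \int_0^\infty (1-\psi(\rho))\,\widetilde{\psi}_j(\rho)\,\frac{\sin(R\rho)}{\rho}\,\cos(\rho\eta)\,d\rho,
\end{align*}
and then use the product-to-sum identity $\sin(R\rho)\cos(\rho\eta) = \tfrac{1}{2}\bigl(\sin((R+\eta)\rho) + \sin((R-\eta)\rho)\bigr)$ to split $m_{R,j}$ into two oscillatory integrals, one with phase $(R+\eta)\rho$ and one with phase $(R-\eta)\rho$. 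The amplitude $a_j(\rho) := (1-\psi(\rho))\widetilde{\psi}_j(\rho)/\rho$ is smooth, supported in $[2^{j-1}, 2^{j+1}]$, and satisfies the natural scaling bounds $|a_j^{(n)}(\rho)| \lesssim_n 2^{-j} 2^{-jn}$ (each derivative costs a factor $2^{-j}$ because of the length scale of the support, and the $1/\rho$ contributes the overall $2^{-j}$).

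The core estimate is then a standard non-stationary-phase (repeated integration by parts) bound for each piece. For the phase $(R-\eta)\rho$, integrating by parts $q$ times moves $q$ derivatives onto the amplitude and produces $q$ factors of $1/(R-\eta)$; combined with the support length $\sim 2^j$ and the derivative bounds on $a_j$, this yields
\begin{align*}
\left|\int a_j(\rho)\sin((R-\eta)\rho)\,d\rho\right| \lesssim_q \frac{2^{-jq}}{|R-\eta|^q}\cdot 2^j \cdot 2^{-j} = \frac{1}{(2^j|R-\eta|)^q},
\end{align*}
which I would combine with the trivial bound (absolute value of amplitude times support length $\lesssim 1$, valid when $2^j|R-\eta| \leq 1$) to get the uniform estimate $(1+2^j|R-\eta|)^{-q}$. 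The $(R+\eta)\rho$ piece is harmless: since $R, \eta \geq 0$ and $\rho \sim 2^j$, one has $R+\eta \geq \rho \gtrsim 2^j$, so the same integration by parts gives decay in $2^j(R+\eta)$, which dominates the required $(1+2^j|R-\eta|)^{-q}$ bound (as $R+\eta \geq |R-\eta|$). This disposes of the $l=0$ case.

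For the $R$-derivatives, I would note that differentiating $m_{R,j}$ in $R$ under the integral sign brings down a factor $\rho \sim 2^j$ from $\sin(R\rho)$, so $\partial_R^l$ replaces the amplitude $a_j(\rho)$ by $\rho^l a_j(\rho) \cos$ or $\sin$, effectively multiplying the amplitude size by $2^{jl}$ while preserving the same scaling structure and support. Running the identical integration-by-parts argument then produces exactly the claimed factor $2^{jl}$ in front of $(1+2^j|R-\eta|)^{-q}$. The main obstacle, though mild, is bookkeeping: one must verify that differentiation in $R$ commutes with the integral (justified by the compact $\rho$-support and smoothness of everything), and that the derivative bounds on $\rho^l a_j(\rho)$ retain the clean scaling $|(\rho^l a_j)^{(n)}| \lesssim 2^{j(l-1)}2^{-jn}$ so that the final count of powers of $2^j$ comes out to precisely $2^{jl}$. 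Since both $l$ and $q$ are arbitrary nonnegative integers, the constant $C_{l,q}$ is allowed to depend on both, and no uniformity in these indices is required, which keeps the argument entirely elementary.
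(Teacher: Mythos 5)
Your proof is correct and follows essentially the same route as the paper: the exact identity $J_{1/2}(s)=\sqrt{2/(\pi s)}\,\sin s$, the product-to-sum splitting into the phases $(R+\eta)\rho$ and $(R-\eta)\rho$, and repeated integration by parts against a smooth amplitude at scale $2^j$ (the paper rescales $\rho \mapsto 2^j\rho$ to a unit-scale amplitude before integrating by parts, while you track the $2^{-j}$-per-derivative bounds directly, which is an immaterial difference). One cosmetic slip: the claim ``$R+\eta \geq \rho \gtrsim 2^j$'' is false (take $R=1$, $\eta=0$, $j$ large), but it is also unnecessary, since your parenthetical observation $R+\eta\geq|R-\eta|$ already shows the integration-by-parts decay in $2^j(R+\eta)$ is dominated by the required $(1+2^j|R-\eta|)^{-q}$ bound.
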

\begin{proof}
Recall that  $$m_{R,j}(\eta) = \int_0^\infty\left(1-\psi(\rho)\right) \widetilde{\psi}_j(\rho)  \frac{\sin (R\rho)}{\rho}\cos(\rho\eta)\, d\rho.$$

Now, since $1-\psi(\rho)\equiv 1$ on $[\epsilon_0,\infty)$, we have $\left(1-\psi(\rho)\right) \widetilde{\psi}_j(\rho) = \widetilde{\psi}_j(\rho)$, for every $j\geq 1$ (for sufficiently small $\epsilon_0 >0$).
A further simplification gives us that 
\begin{align*} 
m_{R,j}(\eta) &=  \frac{1}{2}\int_0^\infty \widetilde{\psi}_j(\rho) \frac{\sin \left((R+\eta)\rho\right) + \sin \left((R-\eta)\rho\right) }{\rho}\, d\rho \\
&= \frac{1}{2} \int_0^\infty \left(\Psi(\rho) - \Psi(2\rho)\right) \frac{\sin \left((R+\eta) 2^j\rho\right) + \sin \left((R-\eta)2^j\rho\right) }{\rho}\, d\rho. 
\end{align*} 

Recalling that $\Psi(\rho) - \Psi(2\rho)$ is supported in $[\frac{1}{2},2]$, the usual integration by parts gives us the desired  estimates for $m_{R,j}$ and it's derivatives.
\end{proof}
%

We next move to a key estimate, which in essence is the analogue of Lemma 2.3 of \cite{CS}, and this will be used further in the proof of Theorem \ref{A1}. For this, we define the operator $T_{\epsilon}$ as 
$$T_{\epsilon} f(z,t) = \int_{\R\setminus\{0\}}  \sum_{k \geq 0} \chi_{\{|1-\sqrt{|\lambda|(2k+d)}| < \epsilon\}}((2k+d)|\lambda|) P_{k,\lambda}(f^{-\lambda})(z) e^{i\lambda t}|\lambda|^d \, d\lambda,$$ 
for suitable functions. We have the following estimate for the operator $T_{\epsilon}$. 

\begin{restatable}{lem}{technicallemma}\label{B}
For any $0 \leq \beta <1$, and $\epsilon>0$, we have 
\begin{equation}\label{key-ineq}
\int_{\H^d} \left|T_{\epsilon}f(z,t) \right|^2 \, dz \, dt \lesssim_{\beta} 
\begin{cases}
\epsilon^{\beta} \int_{\H^d} |f(z,t)|^2 \|(z,t)\|^{2\beta} \, dz \, dt & \textup{when } 0<\epsilon<1;\\ \\
\epsilon^{4\beta} \int_{\H^d} |f(z,t)|^2 \|(z,t)\|^{2\beta} \, dz \, dt & \textup{when } \epsilon \geq 1. 
\end{cases}
\end{equation}
for any $f \in L^2(\mathbb{H}^d, \|(z,t)\|^{2\beta} \, dz \, dt)$. Moreover, $\epsilon^{\beta}\|(z,t)\|^{2\beta}$  on the R.H.S of the inequality above cannot, in general, be replaced by $\epsilon^{\beta}\|(z,t)\|^{\gamma}$ for any $\gamma < 2\beta.$ 
\end{restatable}

We postpone the proof of Lemma \ref{B} to the next section. 

\begin{rem} If we compare the above inequality with the one in Lemma $2.3$ in \cite{CS} (for the Laplacian $\Delta$ on $\R^d$), there it was shown that 
$$\int_{||\xi|-t| \leq \delta}|h(\xi)|^2 d\xi\leq c_{\alpha}\delta^{2\alpha}\int_{\R^n}|\hat{h}(\xi)|^2 |\xi|^{2\alpha}d\xi,$$ 
whenever $0\leq \alpha<1/2$, $t> 0 $ and $0 < \delta < 2t$. Here $\widehat{h}$ is the Euclidean Fourier transform of $h$ on $\R^n$. The powers of $\delta$ and the weight function $|\xi|$ on the R.H.S. of the inequality above are the same. 
However, in Lemma \ref{B} above, because of the non-isotropic homogeneity of the Cygan-Kor\'anyi norm on $\H^n$, we do not get the same power in $\epsilon$ and the weight function $\|(z,t)\|$.
\end{rem}	

\begin{rem}\label{refine-rem} As a consequence of Lemma \ref{B}, we get a refinement when $f$ is a compactly supported function. More precisely, given a compact set $K\subset \H^d$, for any $0 \leq \beta<1$ and $\eta<\beta,$ 
\begin{equation}\label{refine}
\int_{\H^d} \left|T_{\epsilon}f(z,t) \right|^2 \, dz \, dt \lesssim_{K,\beta, \eta} 
\begin{cases}
\epsilon^{\beta} \int_{\H^d} |f(z,t)|^2 \|(z,t)\|^{2\eta} \, dz \, dt & \textup{when } 0<\epsilon<1;\\ \\
\epsilon^{4\beta} \int_{\H^d} |f(z,t)|^2 \|(z,t)\|^{2\eta} \, dz \, dt & \textup{when } \epsilon \geq 1. 
\end{cases} 
\end{equation}
for any $f \in L^2(\mathbb{H}^d, \|(z,t)\|^{2\eta} \, dz \, dt)$ such that $\text{supp}(f)\subset K.$ 
\end{rem}

We are now in a position to prove Theorem \ref{A1}. 

\begin{proof}[{\bf Proof of Theorem \ref{A1}}]
Fix a $\beta \in (0,1)$ and $0<\eta<\beta$. Define $2\mu=\beta-\eta.$ Let $\omega\in C^{\infty}(\R)$ be such that $0 \leq \omega \leq 1$ on $\R,$ $\omega(R)\equiv 1$ when $R\geq 1$ and $\omega(R) \equiv 0$ when $R\leq \frac{1}{2}.$ For convenience, we work with the operator $\omega(R) B_{R,j}.$ Now,

\begin{align} \label{sobolev-ineq}
\sup_{R>1} R^{2\mu} \left|\sum_{j\geq 1} B_{R,j} f(z,t) \right|^2  & \leq \sup_{R \in \R} R^{2\mu}  \left|\omega(R)\sum_{j\geq 1} B_{R,j} f(z,t) \right|^2 \\
\nonumber & \lesssim_\gamma \int_{\R} \left| \sum_{j\geq 1} D^{\gamma} \left(R^{\mu}\omega(R) B_{R,j}f(z,t)\right) \right|^2 \, dR \\
\nonumber & \lesssim_\gamma \left(\sum_{j\geq 1} \left(\int_{\R} \left| D^{\gamma} \left(R^{\mu} \omega(R)B_{R,j}f(z,t)\right) \right|^2 \, dR\right)^{1/2}\right)^2,
\end{align}
for any $\frac{1}{2} < \gamma < 1,$ where the second last estimate follows from Sobolev inequality, with $D^{\gamma}$ denoting the fractional derivative in $R$-variable defined by the Euclidean Fourier multiplier as \begin{align}\label{def-Fourier-mult}
\widehat{D^{\gamma} g}(\tau)= |\tau|^{\gamma} \widehat{g}(\tau).
\end{align}

Therefore, by Minkowski's inequality 
\begin{align}\label{infinite-sum} 
& \left(\int_{\|(z,t)\| < 2 A^{-2}} \sup_{R>1} R^{2\mu} \left| \omega(R) \sum_{j\geq 1} B_{R,j} f(z,t) \right|^2 \, dz \, dt\right)^{1/2} \\
\nonumber \begin{split}
&\quad \quad \lesssim_\gamma \sum_{j\geq 1} \left( \int_{\|(z,t)\| < 2 A^{-2}} \int_{\R} \left| D^{\gamma} \left(R^{\mu} \omega(R)B_{R,j}f(z,t)\right) \right|^2 \, dR \, dz \, dt \right)^{1/2}.
\end{split} 
\end{align}

We will now analyse terms corresponding to each index $j$ separately. Let us first consider the case $\gamma=0$. Note that 
$$B_{R,j}f (z,t) = \left(f \ast K_{R,j} \right) (z,t),$$
where 
$$K_{R,j}(z,t) =  \int_0^\infty (1-\psi(\rho)) \widetilde{\psi}_j(\rho)  \frac{ \sin(R\rho)}{\rho} \cos(\rho \sqrt{\CL}) \delta (z,t) \, d\rho,$$
where $\delta$ denotes the Dirac distribution at origin in $\H^d$. We claim that $K_{R,j}$ is compactly supported in $\{\|(z,t)\| \leq 2^{j+1} A\}.$ This claim follows from the fact that $\cos(\rho \sqrt{\CL}) \delta$ is compactly supported in $\{\|(z,t)\| \leq \rho A\}$, because of the finite speed of propagation of the wave equation (as discussed in the beginning of this section), and $\widetilde{\psi}_j (\rho)$ is supported in the interval $[2^{j-1}, 2^{j+1}]$. For a large enough $c>0$, it follows directly from the definition of convolution on $\H^d$ that 
\begin{align*} 
\left(\chi_{\{\| \cdot \| > c 2^{j}\}} f \right) \ast K_{R,j} (z,t) = 0,
\end{align*}
for all $(z,t)$ with $\|(z,t)\| < 2 A^{-2}$ and for all $j$. As a consequence, we may assume without loss of generality that $f$ is supported in the ball $\|(z,t)\| \leq c 2^{j}$, and we shall shortly make use of this assumption. 

Now, as also done in \cite{CS}, it is enough to show that 
\begin{align}\label{compare-eq-1} 
&\int_{\H^d} \int_0^{\infty} R^{2\mu} \omega(R)^2 |f \ast K_{R,j}(z,t)|^2 {\|(z,t)\|^{-\eta}} \, dz \, dt \, dR\\
\nonumber \begin{split}
&\quad \quad \lesssim_{\beta,\eta} 2^{-j\beta} 2^{-j} \int_{\H^d}|f(z,t)|^2 \, dz \, dt.
\end{split} 
\end{align}

By duality, the above is equivalent to proving 
\begin{align}\label{compare-eq-2} 
& \lefteqn{\int_{\H^d}\left|\int_0^{\infty} f_{R} \ast K_{R,j} (z,t) \omega(R) \, dR\right|^2 \, dz \, dt} \\
\nonumber \begin{split}
&\quad \quad \lesssim_{\beta,\eta} 2^{-j\beta} 2^{-j} \int_{\H^d}\int_0^{\infty} \omega(R)^2|f_R(z,t)|^2 {\|(z,t)\|^{\eta}}R^{-2\mu} \, dR \, dz \, dt. 
\end{split} 
\end{align}

After applying Euclidean Plancherel theorem in the $t$-variable and orthogonality of the special Hermite projections, we can write
\begin{align*} 
& \lefteqn{\int_{\H^d}\left|\int_0^{\infty} f_{R} \ast K_{R,j}  (z,t) \omega(R) \, dR\right|^2 \, dz \, dt} \\
\begin{split}
&\quad = c_d \int_{\R^{2d+1}}\sum_k \left|\int_{0}^{\infty}m_{R,j}(\sqrt{(2k+d)|\lambda|}) P_{k, \lambda} (f_R^{\lambda})(z) |\lambda|^d \omega(R) \, dR\right|^2 \, d\lambda \, dz. 
\end{split} 
\end{align*}

By Lemma \ref{A}, it is easy to see that 
$$|m_{R,j}(\sqrt{(2k+d)|\lambda|})| \lesssim_L \sum_n 2^{-nL} \chi_{\{2^j|\sqrt{(2k+d)|\lambda|}-R|\leq 2^n\}}((2k+d)|\lambda|),$$ for $L>0$ large enough. So, it is enough to look at
$$\int_{\R^{2d+1}}\sum_k \left|\int_{0}^{\infty}\chi_{\{2^j|\sqrt{(2k+d)|\lambda|}-R|\leq 2^n\}}((2k+d)|\lambda|) P_{k, \lambda} (f_R^{\lambda})(z) \omega(R) \, dR\right|^2 |\lambda|^{2d} \, d\lambda \, dz.$$

Now, we apply Cauchy Schwarz inequality to get 
\begin{align*} 
& \lefteqn{\left|\int_{0}^{\infty}\chi_{\{2^j|\sqrt{(2k+d)|\lambda|}-R|\leq 2^n\}}((2k+d)|\lambda|) P_{k, \lambda} (f_R^{\lambda})(z) \omega(R) \, dR\right|^2} \\
\begin{split}
&\quad \quad \leq 2^{n-j}\int_{0}^{\infty}\chi_{\{2^j|\sqrt{(2k+d)|\lambda|}-R|\leq 2^n\}}((2k+d)|\lambda|) |P_{k, \lambda} (f_R^{\lambda})(z)|^2 \omega(R)^2 \, dR. 
\end{split} 
\end{align*}
We now consider the following two cases:

\medskip \noindent {\bf Case 1. When $\frac{2^{n-j}}{R}<1$.}

Using the non-isotropic dilation $(z,t)\rightarrow \delta_{R}(z,t)$ for a fixed $R>0$, it is easy to see that by putting $\epsilon=\frac{2^{n-j}}{R}$ in the inequality \eqref{refine} of Remark \ref{refine-rem} we get
\begin{align*}
& \int_{\R^{2d}}\int_{\R\setminus\{0\}}\sum_k\left|\int_{0}^{\infty}\chi_{\{2^j|\sqrt{(2k+d)|\lambda|}-R|\leq 2^n\}}((2k+d)|\lambda|) P_{k, \lambda} (f_R^{\lambda})(z) \omega(R) \, dR\right|^2 |\lambda|^{2d} \, d\lambda \, dz\\
\nonumber & \leq 2^{n-j}\int_{0}^{\infty} \int_{\R^{2d}}\int_{\R\setminus\{0\}} \chi_{\{2^j|\sqrt{(2k+d)|\lambda|}-R| \leq 2^n\}}((2k+d)|\lambda|) |P_{k, \lambda}(f_R^{\lambda})(z)|^2 \omega(R)^2 \, dz \, dt \, dR\\
\nonumber &\lesssim_{\beta, \eta} 2^{n-j}2^{\beta(n-j)}\int_{\H^d}\int_{0}^{\infty}\omega(R)^2|f_R(z,t)|^2 \|(z,t)\|^{\eta} R^{\eta-\beta} \, dz \, dt \, dR,
\end{align*}
for any $0 \leq \beta <1$ and $\eta<2\beta,$ which is clearly satisfied as we have chosen  $\eta<\beta$. 

\medskip \noindent {\bf Case 2. When $\frac{2^{n-j}}{R}\geq1$ }. 

Once again, we put $\epsilon=\frac{2^{n-j}}{R}$, and apply the non-isotropic dilation $(z,t)\rightarrow (R z,R^2 t)$ for a fixed $R>0$ in the inequality \eqref{refine} of Remark \ref{refine-rem}, and we have  
\begin{align*} 
& \int_{\R^{2d}}\int_{\R\setminus\{0\}}\sum_k\left|\int_{0}^{\infty}\chi_{\{2^j|\sqrt{(2k+d)|\lambda|}-R|\leq 2^n\}}((2k+d)|\lambda|) P_{k, \lambda} (f_R^{\lambda})(z) \omega(R) \, dR\right|^2 |\lambda|^{2d} \, d\lambda \, dz\\
\nonumber & \leq 2^{n-j}\int_{0}^{\infty} \int_{\R^{2d}}\int_{\R\setminus\{0\}} \chi_{\{2^j|\sqrt{(2k+d)|\lambda|}-R|\leq 2^n\}}((2k+d)|\lambda|) |P_{k, \lambda} (f_R^{\lambda})(z)|^2 \omega(R)^2 |\lambda|^{2d} \, d\lambda\, dz  \, dR\\
\nonumber & \lesssim_{\beta,\eta} 2^{n-j}2^{4\beta(n-j)}\int_{\H^d}\int_{0}^{\infty}\omega(R)^2|f_R(z,t)|^2 \|(z,t)\|^{\eta} R^{\eta-4\beta} \, dz \, dt \, dR \\
\nonumber & \lesssim_{\beta,\eta} 2^{n-j}2^{4\beta(n-j)}\int_{\H^d}\int_{0}^{\infty}\omega(R)^2|f_R(z,t)|^2 \|(z,t)\|^{\eta} R^{\eta-\beta} \, dz \, dt \, dR, 
\end{align*}
for any $0 \leq \beta <1$ and $\eta<\beta.$ The last inequality follows from the fact that $\omega$ is supported on $[1/2, \infty)$. We can choose $L$ large enough to make the above sum convergent in $n$ and take the maximum over weight function in $R$ and the constant in the inequality as well.
Thus, we have 
\begin{align*} 
& \lefteqn{\int_{\H^d}\int_0^{\infty}R^{2\mu} \omega(R)^2 |B_{R,j}f(z,t)|^2 \|(z,t)\|^{-\eta} \, dz \, dt \, dR}\\
\begin{split}
& \quad \quad \lesssim_{\beta,\eta} 2^{-j-\beta j} \int_{\H^d} |f(z,t)|^2 \, dz \, dt \\ 
& \quad \quad \sim_{\beta,\eta} 2^{-j-\beta j} \int_{\|(z,t)\| \leq c 2^{j}} |f(z,t)|^2 \, dz \, dt \\ 
& \quad \quad \lesssim_{\beta,\eta, \delta} 2^{-j-\delta j} \int_{\|(z,t)\| \leq c 2^{j}} |f(z,t)|^2 \|(z,t)\|^{-(\beta-\delta)} \, dz \, dt \\
& \quad \quad \lesssim_{\beta,\eta, \delta} 2^{-j-\delta j} \int_{\H^d} |f(z,t)|^2 \|(z,t)\|^{-(\beta-\delta)} \, dz \, dt 
\end{split} 
\end{align*}
for any $0 < \beta < 1,$ and $\delta>0$ such that $\beta-\delta > 0.$ In the above estimation, we made use of the fact that $f$ is supported in the ball $\|(z,t)\| \leq c 2^{j}$.  

When $\gamma=1,$ denoting by $D^1$ the distributional derivative in $R$-variable $\frac{d}{dR}$, one could use similar method as above to prove that
\begin{align*} 
& \lefteqn{\int_{\H^d}\int_{0}^{\infty} \left|\frac{d}{dR} (R^{\mu}\omega(R)B_{R,j}f(z,t))\right|^2 \|(z,t)\|^{-\eta} \, dz \, dt \, dR}\\
\begin{split}
&\quad \lesssim_{\beta,\eta, \delta} 2^{j-\delta j}\int_{\H^d} |f(z,t)|^2 \|(z,t)\|^{-(\beta-\delta)} \, dz \, dt. 
\end{split} 
\end{align*}

And then for $0<\gamma<1$, one could apply interpolation theorem for weighted $L^p$ spaces (see 5.3 of Section 5, Chapter 5 of \cite{SW} with $p_0 = p_1 =2$). In particular, we have 
\begin{align*} 
& \lefteqn{\int_{\H^d} \int_{\R} \left| D^{\gamma} \left(R^{\mu} \omega(R) B_{R,j}f(z,t)\right)\right|^2 \|(z,t)\|^{-\eta} \, dz \, dt \, dR} \\
\begin{split}
&\quad \lesssim_{\beta,\eta, \delta, \gamma} 2^{-j-\delta j +2\gamma j} \int_{\H^d} |f(z,t)|^2 \|(z,t)\|^{-(\beta-\delta)} \, dz \, dt. 
\end{split} 
\end{align*}

Finally, given $\delta>0$, one could choose $\frac{1}{2}<\gamma<1$ such that $1+\delta-2\gamma >0.$ Using these estimates in inequality \eqref{infinite-sum}, we have 
\begin{align*}
& \int_{\|(z,t)\| < 2 A^{-2}} \sup_{R>1}R^{2\mu} | \sum_{j\geq 1} B_{R,j} f(z,t)|^2 \, dz \, dt \\
\begin{split}
& \lesssim_{\beta, \delta, \gamma} \left(\sum_{j\geq 1} \left( \int_{\|(z,t)\| < 2 A^{-2}} \int_{\R} \left| D^{\gamma} \left(R^{\mu} \omega(R) B_{R,j}f(z,t)\right) \right|^2 \, dR \, dz \, dt \right)^{\frac{1}{2}}\right)^2 \\
& \lesssim_{\beta, \delta, \gamma} \left(\sum_{j\geq 1} \left( 2^{-j-\delta j +2\gamma j} \int_{\H^d}|f(z,t)|^2 \|(z,t)\|^{-(\beta-\delta)} \, dz \, dt \right)^{\frac{1}{2}} \right)^2\\
& \lesssim_{\beta, \delta, \gamma} \int_{\H^d}|f(z,t)|^2 \|(z,t)\|^{-(\beta-\delta)} \, dz \, dt.
\end{split} 
\end{align*}

Since $0 < \delta < \beta$ is arbitrary, this completes the proof of Theorem \ref{A1}.
\end{proof}

\section{Technical Lemmas}\label{key-estimates}
We will prove Lemma \ref{B} in this section. For convenience, we state it again. 

\technicallemma*
\begin{proof}
We would like to first remark that when $\epsilon$ is away from $0$, the estimate of the lemma holds true easily. To see this, let $\epsilon \geq 1$, then $\left|1-\sqrt{|\lambda|(2k+d)}\right| < \epsilon$ implies $|\lambda|(2k+d) < 4 \epsilon^2$. In this case one could use Plancherel theorem for the Euclidean space in last variable and the orthogonality of $P_{k,\lambda}$'s in $L^2(\C^d)$ to have 
\begin{align}\label{ineq-scaled-key-lemma}
\int_{\H^d} \left|T_{\epsilon}f(z,t) \right|^2 \, dz \, dt & \leq C_{\beta} \epsilon^{4\beta} \int_{\R\setminus\{0\}} \sum_{k \geq 0} \frac{1}{(|\lambda|(2k+d))^{2\beta}} \|P_{k,\lambda}(f^{-\lambda})\|^2_{L^2(\C^d)} |\lambda|^{2d} \, d\lambda \\
\nonumber & = C_{\beta} \epsilon^{4\beta} \int_{\H^d} |\CL^{-\beta}f(z,t)|^2 \, dz \, dt \\
\nonumber & \lesssim_{\beta} \epsilon^{4\beta} \int_{\H^d} |f(z,t)|^2 \|(z,t)\|^{2\beta} \, dz \, dt,
\end{align}
where the last inequality follows from the Hardy-Sobolev inequality on the Heisenberg group (see Section 3 in \cite{CCR}). 

Let us now assume that $0< \epsilon < 1$. Again, using the Plancherel theorem for the Euclidean space in the last variable and orthogonality of $P_{k,\lambda}$'s in $L^2(\C^d)$, we get 
\begin{align*} 
& \int_{\H^d} \left|T_{\epsilon}f(z,t) \right|^2 \, dz \, dt \\ 
&\quad = c_d \int_{\R\setminus\{0\}} \sum_{k \geq 0} \chi_{\{|1-\sqrt{|\lambda|(2k+d)}| < \epsilon\}} ((2k+d)|\lambda|) \|P_{k,\lambda}(f^{-\lambda})\|^2_{L^2(\C^d)} |\lambda|^{2d} \, d\lambda \\ 
&\quad \lesssim \int_{\R\setminus\{0\}} \sum_{k \geq 0} \chi_{\{|1-|\lambda|(2k+d)| < 3\epsilon\}} ((2k+d)|\lambda|) \|P_{k,\lambda}(f^{-\lambda})\|^2_{L^2(\C^d)} |\lambda|^{2d} \, d\lambda \\ 
&\quad = \int_{\R\setminus\{0\}} \sum_{\frac{1-3\epsilon}{|\lambda|} <2 k+d < \frac{1+3\epsilon}{|\lambda|} } \|P_{k,\lambda}(f^{-\lambda})\|^2_{L^2(\C^d)} |\lambda|^{2d} \, d\lambda \\ 
&\quad = I_{1,\epsilon} + I_{2,\epsilon},
\end{align*}
where, 
\begin{align*}
I_{1,\epsilon} & := \int_{\R\setminus\{0\}} \sum_{\substack{\frac{1-3\epsilon}{|\lambda|} <2 k+d < \frac{1+3\epsilon}{|\lambda|} \\ \lfloor 6 \epsilon/ |\lambda|\rfloor \geq 1}} \|P_{k,\lambda}(f^{-\lambda})\|^2_{L^2(\C^d)} |\lambda|^{2d} \, d\lambda, \\
I_{2,\epsilon} & := \int_{\R\setminus\{0\}} \sum_{\substack{\frac{1-3\epsilon}{|\lambda|} <2 k+d < \frac{1+3\epsilon}{|\lambda|} \\ \lfloor 6\epsilon/ |\lambda|\rfloor =0}} \|P_{k,\lambda}(f^{-\lambda})\|^2_{L^2(\C^d)} |\lambda|^{2d} \, d\lambda. 
\end{align*}

Now, using the fact that 
$$\sum_{k \geq 0} \|P_{k,\lambda}(f^{-\lambda})\|^2_{L^2(\C^d)} |\lambda|^{2d} = C_d \|f^{\lambda}\|^2_{L^2(\C^d)},$$
we get 
$$I_{1,\epsilon} \lesssim_{\beta} \epsilon^{\beta} \int_{\R\setminus\{0\}} \frac{1}{|\lambda|^{\beta}} \|f^{\lambda}\|^2_{L^2(\C^d)} \, d\lambda,$$
for every $\beta \geq 0$. Restricting $\beta$ in the open interval $(0,1)$, we invoke Hardy's inequality for fractional derivatives in $\lambda$-variable to have 
\begin{align}\label{Hardy-fractional}
\int_{\R\setminus\{0\}} \frac{1}{|\lambda|^{\beta}} \left| f^{\lambda} (z) \right|^2 \, d\lambda \lesssim_{\beta} \int_{\R} \left| D^{\beta/2} f^{\lambda} (z) \right|^2 \, d\lambda = C_\beta \int_{\R} |f(z,t)|^2 |t|^{\beta} \, dt,
\end{align}
where $D^{\beta/2}$ denotes the fractional derivative in $\lambda$-variable (as in \eqref{def-Fourier-mult}). 

Therefore, for any $\beta \in [0,1)$, 
\Bea 
I_{1,\epsilon} &\lesssim_{\beta}& \epsilon^{\beta} \int_{\H^d} |f(z,t)|^2 |t|^{\beta} \, dz \, dt.
\Eea

On the other hand, using relation \eqref{e_k}, we have that
\begin{align*}
I_{2,\epsilon} 
&= \int_{\R\setminus\{0\}} \sum_{\substack{\frac{1-3\epsilon}{|\lambda|} <2 k+d < \frac{1+3\epsilon}{|\lambda|} \\ \lfloor 6\epsilon/ |\lambda|\rfloor =0}} \|P_{k,\lambda}(f^{-\lambda})\|^2_{L^2(\C^d)} |\lambda|^{2d} \, d\lambda \\
&= c_d \int_{\C^d} \int_{\R\setminus\{0\}} \sum_{\substack{\frac{1-3\epsilon}{|\lambda|} <2 k+d < \frac{1+3\epsilon}{|\lambda|} \\ \lfloor 6\epsilon/ |\lambda|\rfloor =0}} |(f \ast e_k)^{-\lambda}(z)|^2 \, d\lambda \, dz \\
&\lesssim \int_{\C^d} \sum_{2 k+d \leq \frac{1}{3\epsilon}} \int_{\frac{1-3\epsilon}{2k+d}\leq |\lambda|\leq\frac{1+3\epsilon}{2k+d}} |(f \ast e_k)^{-\lambda}(z)|^2 \, d\lambda \, dz.
\end{align*}

For $0 \leq \beta <1$, using Lemma 2.3 of \cite{CS} in the above estimate, one gets 
\begin{align*}
I_{2,\epsilon} &\lesssim_\beta \epsilon^{\beta} \sum_{2 k+d \leq \frac{1}{3\epsilon}} \int_{\H^d} |(f \ast e_k)(z,t)|^2 (2k+d)^{-\beta} |t|^{\beta} \, dz \, dt \\
&\lesssim_\beta \epsilon^{\beta} \sum_{k \geq 0} \int_{\H^d} |(f \ast e_k)(z,t)|^2 (2k+d)^{-\beta} |t|^{\beta} \, dz \, dt, 
\end{align*}

If we could show that for any $0 \leq \beta <1,$
\begin{equation} \label{key-interpolation-ineq}
\sum_{k \geq 0} \int_{\H^d} |(f \ast e_k)(z,t)|^2 (2k+d)^{-\beta} |t|^{\beta} \, dz \, dt \lesssim_{\beta} \int_{\H^d} |f(z,t)|^2 \|(z,t)\|^{2\beta} \, dz \, dt, 
\end{equation}
then that would complete the proof of inequality \eqref{key-ineq} as stated in Lemma \ref{B}. 

In fact, the following estimate holds true 
\begin{align}\label{interpolated-eq}
&\quad \quad \lefteqn{\sum_{k \geq 0} \int_{\H^d} |(f \ast e_k)(z,t)|^2 (2k+d)^{-\beta} |t|^{\beta} \, dz \, dt} \\
\nonumber &\lesssim_{\beta} \int_{\H^d} |f(z,t)|^2 |t|^{\beta} \, dz \, dt + \int_{\C^d} \int_{\R\setminus\{0\}} \frac{\left| f^{\lambda} (z) \right|^2}{|\lambda|^{\beta}} \, d\lambda \, dz + \int_{\C^d} \int_{\R\setminus\{0\}} \frac{|z|^\beta \left| f^{\lambda} (z) \right|^2}{|\lambda|^{\beta /2}} \, d\lambda \, dz,
\end{align}
for any $0 \leq \beta \leq 2$. We shall prove inequality \eqref{interpolated-eq} later in this section (Lemma \ref{A4}). 

Assuming inequality \eqref{interpolated-eq} for now, and by restricting $\beta$ in the open interval $(0,1)$, we can once again invoke Hardy's inequality for fractional derivatives in $\lambda$-variable (see \eqref{Hardy-fractional}). Thus, for $0 \leq \beta < 1$, inequality \eqref{interpolated-eq} implies
\begin{align*}
\sum_{k \geq 0} \int_{\H^d} |(f \ast e_k)(z,t)|^2 (2k+d)^{-\beta} |t|^{\beta} \, dz \, dt 
& \lesssim_{\beta} \int_{\H^d} |f(z,t)|^2 (|z|^{\beta}|t|^{\beta/2}) + |t|^{\beta}) \, dz \, dt \\
& \lesssim_{\beta} \int_{\H^d} |f(z,t)|^2 \|(z,t)\|^{2\beta} \, dz \, dt.
\end{align*}
This completes the proof of the inequality \eqref{key-interpolation-ineq} and hence of \eqref{key-ineq}. 

Finally, we shall show that $\epsilon^{\beta}\|(z,t)\|^{2\beta}$ which appears on the right hand side of inequality \eqref{key-ineq} in the statement of Lemma \ref{B} cannot, in general, be replaced by $\epsilon^{\beta}\|(z,t)\|^{\gamma}$ for any $\gamma < 2\beta.$ To show this, let us assume on contrary that there exists some $\gamma < 2\beta$ such that for all $0< \epsilon<1,$ we have  \begin{equation}\label{key-ineq-false} 
\int_{\H^d} \left|T_{\epsilon}f(z,t) \right|^2 \, dz \, dt \lesssim_{\beta, \gamma} \epsilon^{\beta} \int_{\H^d} |f(z,t)|^2 \|(z,t)\|^{\gamma} \, dz \, dt, 
\end{equation} 
for all $f$ such that the right hand side of the above inequality is finite.

Consider now a function $f$ such that  $f^{\lambda}(z) = \widehat{g}(\lambda) \chi_{B(0,\eta)}(z),$ where $g$ is a Schwartz class function on $\R,$ $\eta$ a fixed positive real number, and $B(0,\eta)$ the open ball in $\C^d$ centred at the origin and of radius $\eta$.

By definition of the operator $T_\epsilon$, for any $0<\epsilon<1$, we have 
\begin{align*} 
& \int_{\H^d} \left|T_{\epsilon} f(z,t) \right|^2 \, dz \, dt \\ 
&\quad =  \int_{\R\setminus\{0\}} \sum_{k \geq 0} \chi_{\{|1-\sqrt{|\lambda|(2k+d)}| < \epsilon\}} ((2k+d)|\lambda|) \|P_k^\lambda(f^{-\lambda})\|^2_{L^2(\C^d)} |\lambda|^{2d} \, d\lambda \\ 
&\quad \geq \int_{\R\setminus\{0\}} \sum_{k \geq 0} \chi_{\{|1-|\lambda|(2k+d)| < \epsilon\}} ((2k+d)|\lambda|) \|P_k^\lambda(f^{-\lambda})\|^2_{L^2(\C^d)} |\lambda|^{2d} \, d\lambda \\ 
&\quad \geq  \int_{\R\setminus\{0\}} \chi_{\{|1- d |\lambda|| < \epsilon\}} ((2k+d)|\lambda|) \|P_0^\lambda(f^{-\lambda})\|^2_{L^2(\C^d)} |\lambda|^{2d} \, d\lambda \\ 
&\quad = C \int_{|1- d |\lambda|| < \epsilon} \left|\int_{\C^d} f^{-\lambda}(z)e^{\frac{-|\lambda||z|^2}{2}}dz\right|^2|\lambda|^d \, d\lambda.
\end{align*}

In particular, for our choice of $f$, the following holds: 
\begin{align*}
& \lefteqn{\int_{|1- d |\lambda|| < \epsilon} \left|\int_{\C^d} \chi_{B(0,\eta)}(z) e^{\frac{-|\lambda||z|^2}{2}} \, dz \right|^2 |\widehat{g}(-\lambda)|^2|\lambda|^d \, d\lambda} \\
\begin{split}
&\quad\quad \lesssim_{\beta} \epsilon^{\beta} \int_{\H^d} |g(t)|^2 \chi_{B(0,\eta)}(z) \|(z,t)\|^{\gamma} \, dz \, dt.
\end{split} 
\end{align*}

For $\epsilon>0$ small, $|\lambda|\sim d^{-1}$ in the integral on the L.H.S. of the above inequality. Using this fact, we get that
$$\int_{|1- d |\lambda|| < \epsilon}| \widehat{g}(\lambda)|^2 \, d\lambda \lesssim_{\beta, \eta} \epsilon^{\beta} \left(\eta^{\gamma} \int_{|t|\leq \eta^2} |g(t)|^2 \, dt + \int_{|t|\geq \eta^2} |g(t)|^2 |t|^{\frac{\gamma}{2}} \, dt \right).$$

By change of variables, it is easy to see that for $R>1,$ the last inequality implies that 
\begin{align} \label{key-ineq-false-special-choice} 
& \lefteqn{\int_{||\lambda|-R|<\epsilon}| \widehat{g}(\lambda)|^2 \, d\lambda} \\
\nonumber \begin{split}
&\quad \lesssim_{\beta, \eta} \frac{\epsilon^{\beta}}{R^{\beta}} \left(\eta^{\gamma} \int_{|t|\leq \frac{\eta^2}{R}} |g(t)|^2 \, dt + R^{\frac{\gamma}{2}}\int_{|t|\geq \frac{\eta^2}{R}} |g(t)|^2 |t|^{\frac{\gamma}{2}} \, dt \right).
\end{split} 
\end{align}

Let us now choose $g$ such that $\widehat{g}(\lambda)= \phi(\lambda-R)$, for some $\phi \in C_c^{\infty}(\R)$ with $\phi \equiv 1$ on $|\lambda| \leq \frac{1}{2}.$ 
It is easy to see that $g(t)= e^{itR} \widehat{\phi}(-t)$. For the above choice of $g$, it is straightforward to verify that 
$$\int_{||\lambda|-R|<\epsilon}|\widehat{g}(\lambda)|^2 \, d\lambda \geq \int_{|\lambda -R|<\epsilon}|\widehat{g}(\lambda)|^2 \, d\lambda = 2 \epsilon.$$

Therefore, inequality \eqref{key-ineq-false-special-choice} implies that
$$\epsilon\lesssim_{\beta,\eta} \frac{\epsilon^{\beta}}{R^{\beta}} \left(\eta^{\gamma} \int_{|t|\leq \frac{\eta^2}{R}} |\widehat{\phi}(-t)|^2 \, dt + R^{\frac{\gamma}{2}}\int_{|t|\geq \frac{\eta^2}{R}} |\widehat{\phi}(-t))|^2 |t|^{\frac{\gamma}{2}} \, dt \right).$$
As we take $R\rightarrow\infty,$ the right hand side of the above inequality goes to $0$ because by assumption $\frac{\gamma}{2}< \beta.$ But $\epsilon>0,$ and therefore we arrive at a contradiction. This proves the sharpness of the exponent of the weight in the inequality \eqref{key-ineq} as stated in Lemma \ref{B}, and completes the proof of Lemma \ref{B}.
\end{proof}

We had claimed estimate \eqref{interpolated-eq} in the proof of Lemma \ref{B}, and we establish it below. 
\begin{lem}\label{A4}
For any $0 \leq \beta \leq 2$, we have 
\begin{align}\tag{\ref{interpolated-eq}}
&\quad \quad \lefteqn{\sum_{k \geq 0} \int_{\H^d} |(f \ast e_k)(z,t)|^2 (2k+d)^{-\beta} |t|^{\beta} \, dz \, dt} \\
\nonumber &\lesssim_{\beta} \int_{\H^d} |f(z,t)|^2 |t|^{\beta} \, dz \, dt + \int_{\C^d} \int_{\R\setminus\{0\}} \frac{\left| f^{\lambda} (z) \right|^2}{|\lambda|^{\beta}} \, d\lambda \, dz + \int_{\C^d} \int_{\R\setminus\{0\}} \frac{|z|^\beta \left| f^{\lambda} (z) \right|^2}{|\lambda|^{\beta /2}} \, d\lambda \, dz.
\end{align}

for functions on $\mathbb{H}^d$ for which the right hand side is finite. 
\end{lem}
\begin{proof}
Note that for $\beta = 0$, estimate \eqref{interpolated-eq} reduces to 
\be\label{interpolation-eq1}
\sum_{k \geq 0} \int_{\H^d} |(f \ast e_k)(z,t)|^2 \, dz \, dt \lesssim \int_{\H^d} |f(z,t)|^2 \, dz \, dt. 
\ee 
which is true in view of the Plancherel theorem for the Heisenberg group, and in fact the above estimate holds with equality (upto a constant multiple).

For $\beta = 2$, we will prove the estimate \eqref{interpolated-eq} by analysing each summand of the left hand side. For this, let us consider the functions $\omega$ and $\zeta_j$ on $\H^d$ defined by $\omega(z,t) = t$ and $\zeta_j(z,t)= z_j$. Note that 
\begin{align*} 
& \omega(z,t) (f \ast e_k)(z,t) \\ 
&= t \int_{\H^d} f(z-w, t-s-\frac{1}{2} Im(z \cdot \bar{w})) e_k(w,s) \, dw \, ds \\
&= \int_{\H^d} (t-s-\frac{1}{2} Im(z \cdot \bar{w})) f(z-w, t-s-\frac{1}{2} Im(z \cdot \bar{w})) e_k(w,s) \, dw \, ds \\ 
&\quad + \int_{\H^d} s f(z-w, t-s-\frac{1}{2} Im(z \cdot \bar{w})) e_k(w,s) \, dw \, ds \\ 
&\quad + \frac{1}{2} \int_{\H^d} Im(z \cdot \bar{w}) f(z-w, t-s-\frac{1}{2} Im(z \cdot \bar{w})) e_k(w,s) \, dw \, ds \\ 
& = (\omega f) \ast e_k (z,t) + f \ast (\omega e_k)  (z,t) + \frac{1}{2} \int_{\H^d} Im(z \cdot \bar{w}) f((z,t)(w,s)^{-1}) e_k(w,s) \, dw \, ds. 
\end{align*}

Plugging $Im(z \cdot \bar{w}) = - \frac{i}{2} \sum_{j=1}^d \left((z_j-w_j) \bar{w_j} - (\bar{z}_j-\bar{w}_j) w_j \right)$ in the last integral of the above expression, we get the following identity: 
\be 
\label{omega} \omega (f \ast e_k) = (\omega f) \ast e_k + f \ast (\omega e_k) -\frac{i}{4} \sum_{j=1}^d \left( (\zeta_j f) \ast (\bar{\zeta_j} e_k) -(\bar{\zeta_j} f) \ast (\zeta_j e_k) \right), 
\ee

Note that the term $\frac{i}{4} \sum_{j=1}^d \left( (\zeta_j f) \ast (\bar{\zeta_j} e_k) -(\bar{\zeta_j} f) \ast (\zeta_j e_k) \right)$ appears in the identity \eqref{omega} above due to the non-commutative group structure of $\H^d.$

Therefore, 
\begin{align} \label{Leibniz-Hn-ineq}
& \int_{\H^d} |(f \ast e_k)(z,t)|^2 (2k+d)^{-2} |t|^{2} \, dz \, dt \\
\nonumber & \quad  \lesssim \int_{\H^d} |((\omega f) \ast e_k)(z,t)|^2 (2k+d)^{-2} \, dz \, dt \\
\nonumber & \quad \quad + \int_{\H^d} |(f \ast (\omega e_k))(z,t)|^2 (2k+d)^{-2} \, dz \, dt \\
\nonumber & \quad  \quad + \sum_{j=1}^d \int_{\H^d} | (\zeta_j f) \ast (\bar{\zeta_j} e_k)(z,t)|^2  (2k+d)^{-2} \,dz \,dt \\
\nonumber & \quad \quad + \sum_{j=1}^d \int_{\H^d} | (\bar{\zeta_j} f) \ast (\zeta_j e_k)(z,t)|^2  (2k+d)^{-2} \,dz \,dt.
\end{align}

It is easy to handle the sum (over $k \geq 0$) of the first term on the right hand side of inequality \eqref{Leibniz-Hn-ineq}. In fact, 
\begin{align}\label{Leibniz-Hn-ineq1}
\sum_{k \geq 0} \int_{\H^d} |((\omega f) \ast e_k)(z,t)|^2 (2k+d)^{-2} \, dz \, dt & \leq \sum_{k \geq 0} \int_{\H^d} |((\omega f) \ast e_k)(z,t)|^2 \, dz \, dt \\
\nonumber & = c_d \int_{\H^d} |(\omega f) (z,t)|^2 \, dz \, dt \\
\nonumber & = c_d \int_{\H^d} |f(z,t)|^2 |t|^2 \, dz \, dt. 
\end{align}

To estimate the sum (over $k \geq 0$) of the second term of inequality \eqref{Leibniz-Hn-ineq}, we make use of the recurrence identities for Laguerre polynomials and Laguerre functions (see Page 92 of \cite{ST}, and equation (5.1.10) on Page 101 of \cite{SZ}) to verify that $\frac{d}{d\lambda} \left( \varphi_k (\sqrt{|\lambda|}z)|\lambda|^d \right)$ equals to 
$$\left(\frac{d}{2}\varphi_{k} (\sqrt{|\lambda|}z) - \frac{k+d-1}{2} \varphi_{k-1}(\sqrt{|\lambda|}z) + \frac{k+1}{2} \varphi_{k+1}(\sqrt{|\lambda|}z)\right) \frac{\lambda}{|\lambda|}|\lambda|^{d -1}.$$

Therefore when $z\neq 0,$ we have
\begin{align*} 
& (\omega e_k)(z,t) = t \int_{\R} e^{i\lambda t} \varphi_k (\sqrt{|\lambda|}z) |\lambda|^d \, d\lambda \\
& \quad = i \int_{\R} e^{i\lambda t} \frac{d}{d\lambda} \left( \varphi_k (\sqrt{|\lambda|}z) |\lambda|^d \right) \, d\lambda \\
& \quad = i \int_{\R\setminus\{0\}} \left( \frac{d}{2}\varphi_{k}(\sqrt{|\lambda|}z) - \frac{k+d-1}{2} \varphi_{k-1}(\sqrt{|\lambda|}z) + \frac{k+1}{2} \varphi_{k+1}(\sqrt{|\lambda|}z)\right) \frac{\lambda}{|\lambda|} \\ 
& \quad \quad \quad \quad \quad |\lambda|^{d -1} e^{i\lambda t} d\lambda.
\end{align*}
 
Then, using the Euclidean Plancherel theorem in $t$-variable, we have that 
$$\lefteqn{\int_{\R} |(f \ast (\omega e_k))(z,t)|^2 \, dt}$$ 
is equal to a constant multiple of 
\begin{align*} 
& \int_{\R\setminus\{0\}} \left| f^{-\lambda} \times_{\lambda} \left( \frac{d}{2}\varphi_{k}(\sqrt{|\lambda|}\cdot) - \frac{k+d-1}{2} \varphi_{k-1}(\sqrt{|\lambda|}\cdot) + \frac{k+1}{2} \varphi_{k+1}(\sqrt{|\lambda|}\cdot)\right) (z) \right|^2  \\ 
& \quad \quad |\lambda|^{2d-2} \, d\lambda.
\end{align*} 

Using \eqref{laguerre-expansion} and \eqref{twisted}, 
the above identity implies that $$\lefteqn{\int_{\R} |(f \ast (\omega e_k))(z,t)|^2 \, dt}$$ is equal to a constant multiple of 
\begin{align*}
& \int_{\C^d} \int_{\R\setminus\{0\}} \frac{d^2}{4|\lambda|^2} \left| f^{-\lambda} \times_{\lambda} \varphi_{k}(\sqrt{|\lambda|}\cdot)(z) \right|^2 |\lambda|^{2d} \, d\lambda \, dz \\
&+ \int_{\C^d} \int_{\R\setminus\{0\}} \frac{(k+d-1)^2}{4 |\lambda|^2} \left| f^{-\lambda} \times_{\lambda} \varphi_{k-1}(\sqrt{|\lambda|}\cdot)(z) \right|^2 |\lambda|^{2d} \, d\lambda \, dz \\
&+ \int_{\C^d} \int_{\R\setminus\{0\}} \frac{(k+1)^2}{4 |\lambda|^2} \left| f^{-\lambda} \times_{\lambda} \varphi_{k+1}(\sqrt{|\lambda|}\cdot)(z) \right|^2 |\lambda|^{2d} \, d\lambda \, dz. 
\end{align*}

Hence, we have that 
\begin{align}\label{Leibniz-Hn-ineq2}
& \sum_{k \geq 0} \int_{\H^d} |(f \ast (\omega e_k))(z,t)|^2 (2k+d)^{-2} \, dz \, dt \\
\nonumber &\quad \quad \lesssim \sum_{k \geq 0} \int_{\C^d} \int_{\R\setminus\{0\}} \frac{1}{|\lambda|^2} \left| f^{-\lambda} \times_{\lambda} \varphi_{k}(\sqrt{|\lambda|}\cdot)(z) \right|^2 \, d\lambda \, dz \\ 
\nonumber &\quad \quad = c_d \int_{\C^d} \int_{\R\setminus\{0\}} \frac{1}{|\lambda|^2} \left| f^{\lambda} (z) \right|^2 \, d\lambda \, dz. 
\end{align}

Now, for estimating the sum (over $k \geq 0$) of the third term on the right hand side of the inequality \eqref{Leibniz-Hn-ineq}, it is sufficient to analyse  
\begin{equation}\label{one} 
\int_{\H^d} | (\zeta_j f) \ast (\bar{\zeta_j} e_k)(z,t)|^2 \,dz \,dt 
\end{equation} 
for a fixed $1 \leq j \leq d$, as the estimates for the terms with other $j$'s follow analogously. 

We will now simplify the expression of $\bar{\zeta_j} e_k(z,t).$ Using \eqref{laguerre-expansion}, we have 
$$\bar{\zeta_j} e_k(z,t) = \int_{\R} \bar{z}_j \varphi_k(\sqrt{|\lambda|}z)|\lambda|^d e^{i \lambda t} \, d\lambda = (2 \pi)^{d/2} \sum_{|\alpha|=k} \int_{\R} \bar{z_j} \phi_{\alpha,\alpha}(\sqrt{|\lambda|}z) |\lambda|^d e^{i \lambda t} \, d\lambda.$$

We first look at $\bar{z}_j \phi_{\alpha,\alpha} (\sqrt{|\lambda|}z).$ It is known (see, for example, equation (1.3.24) on page $18$ in \cite{ST1}) that 
$$\bar{z}_j \phi_{\alpha,\alpha} (\sqrt{|\lambda|}z) = -i|\lambda|^{-1/2} \left\{(2\alpha_j+2)^{1/2} \phi_{\alpha+e_j,\alpha} (\sqrt{|\lambda|}z) - (2\alpha_j)^{1/2} \phi_{\alpha,\alpha-e_j} (\sqrt{|\lambda|}z) \right\}. $$
From this, we get that $\left(\bar{\zeta}_j e_k\right)^{\lambda}(z)$ is a constant multiple of 
$$|\lambda|^{-1/2} \sum_{|\alpha|=k} \left\{ (2\alpha_j+2)^{1/2} \phi_{\alpha+e_j,\alpha}(\sqrt{|\lambda|}z) - (2\alpha_j)^{1/2} \phi_{\alpha,\alpha-e_j} (\sqrt{|\lambda|}z) \right\} |\lambda|^d.$$

Applying the Plancherel theorem in the $t$-variable in \eqref{one} and making use of the above identity for $\left(\bar{\zeta}_j e_k\right)^{\lambda}(z)$, we get 
\begin{align*}
& \int_{\H^d}| (\zeta_j f) \ast (\bar{\zeta_j} e_k)(z,t)|^2 \, dz \,dt \\ 
& \quad \quad = c_d \int_{\C^d}\int_{\R}\left|\left((\zeta_j f) \ast (\bar{\zeta_j} e_k)\right)^{\lambda}(z)\right|^2 \, dz \, d\lambda\\
& \quad \quad = c_d \int_{\C^d}\int_{\R\setminus\{0\}}|(\zeta_j f)^{-\lambda} \times_{\lambda} (\bar{\zeta_j} e_k)^{\lambda}(z)|^2 \, dz \, d\lambda \\
& \quad \quad \lesssim \int_{\C^d}\int_{\R\setminus\{0\}} \left| \sum_{|\alpha|=k}(2\alpha_j+2)^{1/2} (\zeta_j f)^{-\lambda} \times_{\lambda} \phi_{\alpha+e_j,\alpha}(\sqrt{|\lambda|}\cdot)(z) \right|^2 |\lambda|^{2d-1}\,dz \, d\lambda \\ 
& \quad \quad \quad + \int_{\C^d}\int_{\R\setminus\{0\}} \left| \sum_{|\alpha|=k}(2\alpha_j)^{1/2} (\zeta_j f)^{-\lambda} \times_{\lambda}\phi_{\alpha,\alpha-e_j}(\sqrt{|\lambda|}\cdot)(z) \right|^2 |\lambda|^{2d-1}\,dz \,d\lambda \\
& \quad \quad \lesssim (2k+d) \sum_{|\alpha|=k} \int_{\C^d}\int_{\R\setminus\{0\}} \left| (\zeta_j f)^{-\lambda} \times_{\lambda}\phi_{\alpha+e_j,\alpha}(\sqrt{|\lambda|}\cdot)(z) \right|^2 |\lambda|^{2d-1}\,dz \,d\lambda \\ 
& \quad \quad \quad + (2k+d) \sum_{|\alpha|=k} \int_{\C^d}\int_{\R\setminus\{0\}} \left| (\zeta_j f)^{-\lambda} \times_{\lambda}\phi_{\alpha,\alpha-e_j}(\sqrt{|\lambda|}\cdot)(z) \right|^2 |\lambda|^{2d-1}\,dz \,d\lambda,
\end{align*}
where the last inequality follows from the identity \eqref{twisted} together with the fact that $|\lambda|^{d/2} \phi_{\alpha,\beta}(\sqrt{|\lambda|}\cdot)$ are orthonormal in $L^2(\C^d)$. 

From the above inequality, we get
\begin{align}\label{Leibniz-Hn-ineq3}
& \sum_{k=0}^\infty \int_{\H^d}| (\zeta_j f) \ast (\bar{\zeta_j} e_k)(z,t)|^2 (2k+d)^{-2} \, dz \,dt \\ 
\nonumber & \quad \quad \lesssim \sum_{k=0}^\infty \sum_{|\alpha|=k} \int_{\C^d}\int_{\R\setminus\{0\}} \left| (\zeta_j f)^{-\lambda} \times_{\lambda}\phi_{\alpha+e_j,\alpha}(\sqrt{|\lambda|}\cdot)(z) \right|^2 |\lambda|^{2d-1}\,dz \,d\lambda \\ 
\nonumber & \quad \quad \quad + \sum_{k=0}^\infty  \sum_{|\alpha|=k} \int_{\C^d}\int_{\R\setminus\{0\}} \left| (\zeta_j f)^{-\lambda} \times_{\lambda}\phi_{\alpha,\alpha-e_j}(\sqrt{|\lambda|}\cdot)(z) \right|^2 |\lambda|^{2d-1}\,dz \,d\lambda \\
\nonumber & \quad \quad \sim \int_{\C^d} \int_{\R\setminus\{0\}} \frac{1}{|\lambda|}  |z|^2 \left| f^{\lambda} (z) \right|^2 \, d\lambda \, dz.
\end{align}

Similar calculations could be performed to estimate the sum (over $k \geq 0$) of the fourth term on the right hand side of the inequality \eqref{Leibniz-Hn-ineq}. Finally, substituting estimates from \eqref{Leibniz-Hn-ineq1}, \eqref{Leibniz-Hn-ineq2} and \eqref{Leibniz-Hn-ineq3} into \eqref{Leibniz-Hn-ineq}, we have 
\begin{align}\label{interpolation-eq2}
& \quad \quad \lefteqn{\sum_{k \geq 0} \int_{\H^d} |(f \ast e_k)(z,t)|^2 (2k+d)^{-2} |t|^{2} \, dz \, dt} \\
\nonumber
&\quad \lesssim \int_{\H^d} |f(z,t)|^2 |t|^2 \, dz \, dt + \int_{\C^d} \int_{\R\setminus\{0\}} \frac{\left| f^{\lambda} (z) \right|^2}{|\lambda|^2} \, d\lambda \, dz + \int_{\C^d} \int_{\R\setminus\{0\}} \frac{|z|^2 \left| f^{\lambda} (z) \right|^2}{|\lambda|} \, d\lambda \, dz.
\end{align}

Finally, we claim that for $0< \beta < 2$, the estimate \eqref{interpolated-eq} could be proved by invoking interpolation between the parameters $\beta = 0$ and $\beta = 2$ in the inequalities \eqref{interpolation-eq1} and \eqref{interpolation-eq2}. We postpone the proof of this interpolation to the next section (appendix). This completes the proof of Lemma \ref{A4}. 
\end{proof}

\section{Appendix}\label{appendix}
We made use of an interpolation of certain Hilbert spaces while claiming inequality \eqref{interpolated-eq} in the proof of Lemma \ref{A4}. We shall prove the same here. Consider following Hilbert spaces: 
\begin{align}\label{frac-Hilbert} 
A_s = \left\{f \in D^\prime(\mathbb{R}^{2d+1}) : \|f\|_s^2 < \infty \right\},
\end{align}
for each $0 \leq s \leq 1$, where $D^\prime(\mathbb{R}^{2d+1})$ is the space of tempered distributions on $\R^{2d+1}$ and 
\begin{align*}
\|f\|_s^2 &= \int_{\mathbb{R}^{2d} \times \mathbb{R} \setminus \{0\}} \frac{|f(z,t)|^2}{|t|^{2s}} \, dz \, dt + \int_{\mathbb{R}^{2d+1}} \left|D^s f(z,t)\right|^2  \, dz \, dt \\
&\quad + \int_{\mathbb{R}^{2d} \times \mathbb{R} \setminus \{0\}} \frac{|z|^{2s}}{|t|^s} \left|f(z,t)\right|^2  \, dz \, dt.
\end{align*}

Here $D^s$, for $0<s<1$, denotes the fractional derivative in $t$-variable (see \eqref{def-Fourier-mult}), $D^0$ is the identity operator, whereas $D = D^1$ stands for the distributional derivative in $t$-variable. Note also that $A_0$ coincides with  $L^2 (\mathbb{R}^{2d+1})$. 

For each $0 \leq s \leq 1$, and $k = 0, 1, 2, 3, \ldots$, let $M_k^s = L^2(\C^d \times \R, \omega_k^s(z,t) \, dz \, dt)$, with $\omega_k^s (z, t ) = (2k+d)^{-2s} |t|^{2s}$, and consider following vector valued Hilbert Spaces: 
$$l_2(M_k^s) = \left\{a = (a_j^s)_{j \geq 0} ~:~ a_j^s \in M_k^s, \|a\|^2_{l_2(M_k^s)} := \sum_{j=0}^\infty \|a_j^s\|_{M_k^s}^2 < \infty \right\}.$$

Define the linear operator $T$ on $A_0 + A_1$ by
$$Tf = (\tilde{f} \ast e_k)_{k=0}^\infty,$$
where $\tilde{f}(z,t) = \int_{\mathbb{R}} f(z,\lambda) e^{it \lambda} \, d\lambda$.

Then, as shown in inequalities \eqref{interpolation-eq1} and \eqref{interpolation-eq2} in the proof of Lemma \ref{A4}, $T$ maps $A_0$ and $A_1$ boundedly into $l_2(M_k^0)$ and $l_2(M_k^1)$ respectively, that is, 
\Bea 
\|Tf\|_{l_2(M_k^0)} &\lesssim& \|f\|_0, \\
\|Tf\|_{l_2(M_k^1)} &\lesssim& \|f\|_1.
\Eea 

For each $0 < s < 1$, the complex interpolation of the spaces $l_2(M_k^0)$ and $l_2(M_k^1)$ is (see Page 121, Section 1.18.1 of \cite{HT}):
$$\left[ l_2(M_k^0), l_2(M_k^1) \right]_{s} =  l_2\left(\left[M_k^0, M_k^1\right]_{s}\right).$$

Now, the interpolation of the spaces $M_k^0$ and $M_k^1$ is well known and follows from general theory of interpolation of weighted $L^p$ spaces (see, for example, 5.4, Page 241, Chapter 5 of \cite{SW}), and for each $0 < s < 1$ we have 
$$\left[M_k^0, M_k^1\right]_{s} = M_k^s.$$ 

Finally, if we could show that $A_s$, for $0<s<1$, is the complex interpolation space of $A_0$ and $A_1$, then we will have that $T$ is bounded from $A_s$ into $l_2(M_k^s)$, and this will complete the proof of the inequality \eqref{interpolated-eq}. We now explain the complex interpolation of $A_0$ and $A_1$. 

\begin{thm}\label{thm:interpolation} 
Let the Hilbert spaces $A_s$, for $0 \leq s \leq 1,$ be as defined in \eqref{frac-Hilbert}. For each $0<s<1$, the complex interpolation of the pair $(A_0, A_1)$ is $A_s$.  
\end{thm}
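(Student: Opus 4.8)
The plan is to recast the three spaces in terms of a single nonnegative self-adjoint operator acting in the last variable and then to split the argument into an abstract interpolation step and a concrete step identifying a fractional-power domain. Since the $z$-variable is a spectator (every norm is an $L^2(\C^d)$-integral of a quantity depending only on the $t$-behaviour), I would work with the operators $A = |t|^{-2}$ (multiplication), $B = -\partial_t^2$ and $S = A+B = -\partial_t^2 + |t|^{-2}$ acting in $t$ on $L^2(\R^{2d+1})$, all nonnegative and self-adjoint. Then $A_0 = L^2$ with $\|f\|_0^2 = \langle f, f\rangle$, while $\|f\|_1^2 = \int |f|^2|t|^{-2} + \int |\partial_t f|^2 = \langle S f, f\rangle = \|S^{1/2}f\|^2$, so $A_1 = D(S^{1/2})$ (the form domain). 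Using $(D^s f)^\lambda = |\lambda|^s f^\lambda$, so that $B^{s/2} = D^s$ and $A^s = |t|^{-2s}$, one has for $0<s<1$ that $\|f\|_s^2 = \langle (A^s + B^s) f, f\rangle$. Thus Theorem \ref{thm:interpolation} is equivalent to the two statements: \emph{(I)} $[A_0,A_1]_s = D(S^{s/2})$ with $\|f\|_{[A_0,A_1]_s} \approx \|S^{s/2}f\|$; and \emph{(II)} the concrete norm equivalence $\langle S^s f, f\rangle \approx \langle (A^s+B^s) f, f\rangle$.

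For \emph{(I)} I would diagonalise $S$ by the spectral theorem: there is a unitary $\mathcal U$ carrying $S$ to multiplication by $\lambda \ge 0$ on some $L^2(X, d\sigma)$. Under $\mathcal U$ the spaces become weighted $L^2$-spaces, $A_0 \cong L^2(d\sigma)$, $A_1 = D(S^{1/2}) \cong L^2(\lambda\, d\sigma)$ and $D(S^{s/2}) \cong L^2(\lambda^s\, d\sigma)$. Complex interpolation of the weighted pair gives $[L^2(d\sigma), L^2(\lambda\, d\sigma)]_s = L^2(\lambda^s\, d\sigma)$ by the Stein--Weiss theorem already cited from \cite{SW}, and transporting back yields \emph{(I)}. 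Note that the homogeneous (non-graph) nature of the $A_1$-norm causes no difficulty here: on the spectral side the couple is simply a compatible pair of weighted $L^2$-spaces, whose completeness and interpolation are standard.

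For \emph{(II)}, one inequality is immediate: since $A \le S$ and $B \le S$, operator monotonicity of $x \mapsto x^s$ gives $A^s, B^s \le S^s$, hence $\langle (A^s+B^s) f, f\rangle \le 2\langle S^s f, f\rangle$; this already proves $[A_0,A_1]_s \hookrightarrow A_s$. The reverse bound $\langle S^s f, f\rangle \lesssim \langle (A^s+B^s) f, f\rangle$, i.e. $A_s \hookrightarrow [A_0,A_1]_s$, is the crux, and it is exactly the inclusion one needs in order to transport boundedness of $T$ from $[A_0,A_1]_s = D(S^{s/2})$ to $A_s$ and thereby obtain (\ref{interpolated-eq}). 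I would stress that this inequality is \emph{not} a consequence of abstract operator subadditivity: the matrix inequality $(A+B)^s \le A^s + B^s$ fails for noncommuting positive operators, so one must use the concrete structure. Concretely, $S$ is homogeneous of degree $-2$ under the dilations $t \mapsto \mu t$, so that $A$, $B$ and $S$ all scale identically; combining this scaling with the Balakrishnan representation
$$S^s = c_s \int_0^\infty \mu^{s-1}\, S(S+\mu)^{-1} \, d\mu, \qquad c_s = \tfrac{\sin \pi s}{\pi},$$
one compares the resolvents of $S = A+B$ against those of $A$ and $B$ using the one-dimensional (fractional) Hardy inequality $\||t|^{-s} f\| \lesssim \|D^s f\|$. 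In the range $0 < s < 1/2$ actually used in Lemma \ref{B}, this simplifies markedly: fractional Hardy (valid with no vanishing hypothesis because $2s < 1$) gives $\langle A^s f, f\rangle \lesssim \langle B^s f, f\rangle$, so the weight term is dominated by the derivative term, $\|f\|_s^2 \approx \|D^s f\|^2$, and the comparison reduces to matching $S^s$ with $B^s$.

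The main obstacle is precisely this reverse form-comparison for the full range $0 < s < 1$: the singularity of the weight $|t|^{-2s}$ at $t = 0$ on the whole line must be controlled by the Hardy/scaling input rather than by soft operator theory, since the naive subadditivity one would like to invoke is false. Everything else — the spectral diagonalisation, the weighted-$L^2$ interpolation, and the easy inclusion via operator monotonicity — is routine once \emph{(II)} is in place.
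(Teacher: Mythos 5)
Your operator-theoretic reformulation is a clean way to state what must be proved, but it is not a proof: the substantive half of Theorem \ref{thm:interpolation} is exactly the part you label ``the main obstacle'' and leave as a sketch. Granting your step \emph{(I)}, the theorem is equivalent to the two-sided comparison \emph{(II)}, and the only inequality you actually establish is the L\"owner--Heinz direction $\langle (A^s+B^s)f,f\rangle \lesssim \langle S^s f,f\rangle$, i.e.\ $[A_0,A_1]_s \hookrightarrow A_s$. Note that this is the direction the paper never uses: what the application to \eqref{interpolated-eq} in Lemma \ref{B} requires is the opposite inclusion $A_s \hookrightarrow [A_0,A_1]_s$ (one starts from a function with finite $\|\cdot\|_s$-norm and must be allowed to apply the interpolated bound for $T$ to it), and that is precisely the inequality $\langle S^s f,f\rangle \lesssim \langle (A^s+B^s)f,f\rangle$ you defer. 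Your two suggested routes do not close it. First, the Balakrishnan/resolvent route: by the second resolvent identity, $(B+\mu)^{-1}-(S+\mu)^{-1}=(S+\mu)^{-1}A(B+\mu)^{-1}$, so comparing $S^s$ with $B^s$ forces one to estimate $\|A^{1/2}(B+\mu)^{-1}f\|_2=\||t|^{-1}(B+\mu)^{-1}f\|_2$; but $(B+\mu)^{-1}f$ has no vanishing at $t=0$, and on the whole line Hardy's inequality with weight $|t|^{-1}$ fails on such functions --- this failure is not a technicality, it is the very reason $A_1$ is a proper (homogeneous) subspace, and scaling invariance cannot remove it. Second, the claimed simplification for $0<s<1/2$ is circular: after replacing $\|f\|_s$ by $\|D^s f\|_2$ via fractional Hardy, you must still prove $\langle S^s f,f\rangle \lesssim \|D^s f\|_2^2$, i.e.\ $D(B^{s/2})\hookrightarrow D(S^{s/2})$ --- again a comparison of fractional powers of the noncommuting pair $(A,B)$, of exactly the type you correctly observe cannot be obtained from operator monotonicity or subadditivity (which, as you note, is false: $(A+B)^s\le C(A^s+B^s)$ fails even for pairs of rank-one projections). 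So the reduction terminates exactly where the theorem begins.

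A secondary issue: $A_1$ as defined in the paper is a homogeneous space of tempered distributions and contains functions not in $L^2$ (e.g.\ functions growing like $|t|^{1/4}$ at infinity, smooth and vanishing near $t=0$), so it is not literally the form domain $D(S^{1/2})$ of $S$ acting on $L^2$; your step \emph{(I)} therefore also needs an argument about the couple $(A_0,A_1)$ and density of nice functions, which you wave away but which occupies Steps 2 and 3 of the paper's proof. For contrast, the paper supplies the missing hard inclusion concretely, following Triebel \cite{Tr}: a dyadic partition of unity $\{\widetilde{\phi}_j\}$ adapted to the annuli $|t|\sim 2^{-j}$, the Besov-type norm $\|f\|_{s,*}^2=\sum_j \bigl(2^{2js}\|\widetilde{\phi}_j f\|_2^2+\|D^s(\widetilde{\phi}_j f)\|_2^2\bigr)$, the elementary two-sided estimate $\|f\|_{s,*}\approx\|f\|_s$ (the diagonal/off-diagonal Gagliardo-seminorm estimates $I$, $II$, $III$ --- this is the concrete substitute for your missing form inequality), and then interpolation of $l_2$-sums of Hilbert couples \cite{HT}. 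If you want to salvage your framework, that localisation in $t$ is the ingredient you would have to re-import; the abstract frame by itself does not supply it.
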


\begin{proof} 
We closely follow the proof of \cite{Tr}, and rewrite it to suit to our context. For each $j \in \mathbb{Z}$, consider intervals $\Omega_j^+ = \left(2^{-j-3}, 2^{-j+3}\right),$ and $\Omega_j^- = \left(-2^{-j+3}, -2^{-j-3}\right).$ Clearly, these intervals have finite overlap, $(0, \infty) = \cup_{j \in \mathbb{Z}} \Omega_j^+$, and $(-\infty, 0) = \cup_{j \in \mathbb{Z}} \Omega_j^-$. Choose and fix $\phi \in C_c^\infty \left(\Omega_0^+ \right)$ with the property that $\phi \equiv 1$ on the interval $[\frac{1}{2}, 2]$, $0 < \phi \leq 1$ on the interval $(\frac{1}{4}, 4),$ and $0$ elsewhere. Now, define $\phi_j (t) = \phi\left(2^{j}t\right)$ for $j \in \mathbb{Z}$. Finally, for each $j \in \mathbb{Z}$, define  
\Bea
\widetilde{\phi}_j(t) = 
\begin{cases}
\frac{\phi_j(-t)}{\sum_{k \in \mathbb{Z}} \phi_k(-t)} &; \mbox{ for } t \in (-\infty, 0)\\ 
\frac{\phi_j(t)}{\sum_{k \in \mathbb{Z}} \phi_k(t)} &; \mbox{ for } t \in (0,\infty) \, . 
\end{cases}
\Eea

Clearly, $\widetilde{\phi}_j \in C_c^\infty \left(\Omega_j^+ \cup \Omega_j^- \right)$, and $\sum_j \widetilde{\phi}_j \equiv 1$. Moreover, for any $t \in (0, \infty)$, 
\Bea 
\widetilde{\phi}_j^\prime(t) &=& \frac{\phi_j^\prime(t)\sum_k \phi_k(t) - \phi_j(t)\sum_k \phi_k^\prime(t)}{\left(\sum_k \phi_k(t)\right)^2} \\
&=& \frac{\phi_j^\prime(t)\sum_{k= j-3}^{j+3} \phi_k(t) - \phi_j(t)\sum_{k= j-3}^{j+3} \phi_k^\prime(t)}{\left(\sum_{k \in \mathbb{Z}} \phi_k(t)\right)^2} \lesssim 2^j.
\Eea

In the last inequality, we used the fact that $\sum_{k \in \mathbb{Z}} \phi_k$ is uniformly bounded from below on $(0,\infty)$. In fact, $\sum_{k \in \mathbb{Z}} \phi_k(t) \geq 1$. Similar estimate holds true on $(-\infty, 0)$. 

For each $0 \leq s \leq 1$, let us now consider the spaces 
$$W_2^s = \left\{f \in D^\prime(\mathbb{R}^{2d+1}) : \|f\|_{s,*}^2 < \infty \right\},$$
where $\|f\|_{s,*}^2$ is defined to be 
$$\sum_{j \in \Z} \int_{\mathbb{R}^{2d+1}} \left(2^{2js} |(\widetilde{\phi}_j  f)(z,t)|^2 + |D^s (\widetilde{\phi}_j f)(z,t)|^2 + 2^{js} |z|^{2s} |(\widetilde{\phi}_j f)(z,t)|^2 \right) \, dz \, dt.$$

We shall show that 
\begin{enumerate}
\item $W_2^s$ is a Banach space. 
\item $C_c^\infty(\mathbb{R}^{2d} \times \mathbb{R} \setminus \{0\})$ is dense in $W_2^s$ in the $\|\cdot\|_{s,*}$ norm. 
\item $W_2^s$ is the completion of $C_c^\infty(\mathbb{R}^{2d} \times \mathbb{R} \setminus \{0\})$ in the $\|\cdot\|_s$ norm. In other words, the spaces $A_s$ and $W_2^s$ are identical with norm equivalence. 
\end{enumerate} 

It is easy to note that $W_2^0$ is noting but $A_0$ with norm equivalence. So, we only need to study the case when $0 < s \leq 1$.

\medskip \noindent {\bf (1) $W_2^s$ is a Banach space.}

It could be proved using standard arguments, so we omit the proof. 

\medskip \noindent {\bf (2) $C_c^\infty(\mathbb{R}^{2d} \times \mathbb{R} \setminus \{0\})$ is dense in $W_2^s$ in the $\|\cdot\|_{s,*}$ norm.}

Take $f \in W_2^s$, then for any $M \in \mathbb{N}$ large enough, $\| f - f \sum\limits_{|k| \leq M+3} \widetilde{\phi}_k \|_{s,*}^2 $ equals to 
\begin{align*}
&\sum_j 2^{2js} \|\sum_{|k| > M+3} \widetilde{\phi}_k \widetilde{\phi}_j f\|_{L^2(\mathbb{R}^{2d+1})}^2 + \sum_j \|D^s (\sum_{|k| > M+3} \widetilde{\phi}_k \widetilde{\phi}_j f)\|_{L^2(\mathbb{R}^{2d+1})}^2\\
&\quad \quad + \sum_j 2^{js} \||z|^{s} \sum_{|k| > M+3} \widetilde{\phi}_k \widetilde{\phi}_j f\|_{L^2(\mathbb{R}^{2d+1})}^2  \\
=& \sum_{|j| > M} 2^{2js} \|\sum_{|k| > M+3} \widetilde{\phi}_k \widetilde{\phi}_j f\|_{L^2(\mathbb{R}^{2d+1})}^2 + \sum_{|j| > M} \|D^s (\sum_{|k| > M+3} \widetilde{\phi}_k \widetilde{\phi}_j f)\|_{L^2(\mathbb{R}^{2d+1})}^2 \\
&\quad \quad + \sum_{|j| > M} 2^{js} \| |z|^{s} \sum_{|k| > M+3} \widetilde{\phi}_k \widetilde{\phi}_j f\|_{L^2(\mathbb{R}^{2d+1})}^2,
\end{align*}
using the fact that $\widetilde{\phi}_j \widetilde{\phi}_k = 0$ for any $j$ and $k$ with $|j-k| > 3.$ The above estimate is bounded by
\begin{align*} 
& \sum_{|j| > M} \sum_{|k-j| \leq 3} 2^{2js} \|\widetilde{\phi}_k \widetilde{\phi}_j f\|_{L^2(\mathbb{R}^{2d+1})}^2 + \sum_{|j| > M} \sum_{|k-j| \leq 3} \|D^s (\widetilde{\phi}_k \widetilde{\phi}_j f)\|_{L^2(\mathbb{R}^{2d+1})}^2 \\ 
& \quad \quad + \sum_{|j| > M} \sum_{|k-j| \leq 3}  2^{js} \| |z|^s \widetilde{\phi}_k \widetilde{\phi}_j f\|_{L^2(\mathbb{R}^{2d+1})}^2.
\end{align*}

Clearly, the first and third terms of the above sum tend to $0$ as $M \to \infty$. For the second term, we claim that 
$$\|D^s (\widetilde{\phi}_k f)\|_{L^2(\mathbb{R}^{2d+1})}^2 \lesssim 2^{2ks} \| f\|_{L^2(\mathbb{R}^{2d+1})}^2 + \|D^s f\|^2_{L^2(\mathbb{R}^{2d+1})}.$$

From these estimates with $\widetilde{\phi}_j f$ in place of $f$, it follows immediately that 
$$\lim_{M \to \infty} \| f - f \sum_{|k| \leq M+3} \phi_k\|_{s,*} = 0.$$

The claimed estimate for $D^s (\widetilde{\phi}_k f)$, for $s=1,$ follows once we apply Leibniz rule for differentiation and then using estimates of derivatives of $\widetilde{\phi}_k$'s. For $0 < s < 1$, we assume the claimed estimate for now. In fact, in the next step, we estimate the full sum $\sum_k \|D^s (\widetilde{\phi}_k f)\|_{L^2(\mathbb{R}^{2d+1})}^2$. In those detailed calculations, one could easily verify that the claimed estimate for each fixed $k$ also holds. Thus, we have so far shown that every function $f$ in $W^2_s$ could be approximated by a sequence of functions supported in sets of the form $\mathbb{R}^{2d} \times K$ with $K$ compact in $\mathbb{R} \setminus \{0\}$. Finally, one could use standard methods of approximation to complete the claim that $C_c^\infty(\mathbb{R}^{2d} \times \mathbb{R} \setminus \{0\})$ is dense in $W_2^s$ in the $\|\cdot\|_{s,*}$ norm. 

\medskip \noindent {\bf (3) $W_2^s$ is the completion of $C_c^\infty(\mathbb{R}^{2d} \times \mathbb{R} \setminus \{0\})$ in the $\|\cdot\|_s$ norm.}

It is straightforward to verify that 
\begin{align*}
\int_{\mathbb{R}^{2d} \times \mathbb{R} \setminus \{0\}} |f(z,t)|^2 \frac{1}{|t|^{2s}} \, dz \, dt & \sim \sum_j 2^{2js} \int_{\mathbb{R}^{2d+1}} | (\widetilde{\phi}_j f)(z,t)|^2\, dz \, dt, \\
\textup{ and } \int_{\mathbb{R}^{2d} \times \mathbb{R} \setminus \{0\}} |f(z,t)|^2 \frac{|z|^{2s}}{|t|^{s}} \, dz \, dt & \sim \sum_j 2^{js} \int_{\mathbb{R}^{2d+1}} |z|^{2s} | (\widetilde{\phi}_j f)(z,t)|^2\, dz \, dt. 
\end{align*}

Next, writing $\Omega_j = \Omega_j^+ \cup \Omega_j^-$, we perform following calculations to estimate $\sum_j \|D^s (\widetilde{\phi}_j f)\|_{L^2(\mathbb{R}^{2d+1})}$. For $0<s<1$, 
\begin{align*} 
\sum_j \|D^s (\widetilde{\phi}_j f)\|_{L^2(\mathbb{R}^{2d+1})}^2 & =  C \sum_j \int_{\mathbb{R}^{2d}} \int_{\mathbb{R}^2} \frac{|(\widetilde{\phi}_j f)(z,t_1) - (\widetilde{\phi}_j f) (z,t_2)|^2}{|t_1 - t_2|^{1+2s}} \, dz \, dt_1 \, dt_2 \\ 
& \lesssim \sum_j \int_{\mathbb{R}^{2d}} \int_{\mathbb{R} \times \Omega_j} \frac{|(\widetilde{\phi}_j f)(z,t_1) - (\widetilde{\phi}_j f) (z,t_2)|^2}{|t_1 - t_2|^{1+2s}} \, dz \, dt_1 \, dt_2,
\end{align*}
which is dominated by
\begin{align*} 
&\sum_j \int_{\mathbb{R}^{2d}} \int_{\Omega_j \times \Omega_j} |\widetilde{\phi}_j (t_1)|^2 \frac{|f(z,t_1) - f (z,t_2)|^2}{|t_1 - t_2|^{1+2s}} \, dz \, dt_1 \, dt_2 \\
& \quad \quad + \sum_j \int_{\mathbb{R}^{2d}} \int_{\Omega_j \times \Omega_j}  \frac{|\widetilde{\phi}_j(t_1) - \widetilde{\phi}_j (t_2)|^2}{|t_1 - t_2|^{1+2s}} |f(z,t_2)|^2 \, dz \, dt_1 \, dt_2 \\
& \quad \quad + \sum_j \int_{\mathbb{R}^{2d}} \int_{\Omega_j^c \times \Omega_j}  \frac{|\widetilde{\phi}_j (t_2)|^2}{|t_1 - t_2|^{1+2s}} |f(z,t_2)|^2 \, dz \, dt_1 \, dt_2 \\
=: &~~ I + II + III.
\end{align*}

We now estimate sums $I, II$, and $III$ as follows. 
\begin{align*}
I &\leq \sum_j \int_{\mathbb{R}^{2d}} \int_{\Omega_j \times \Omega_j} \frac{|f(z,t_1) - f (z,t_2)|^2}{|t_1 - t_2|^{1+2s}} \, dz \, dt_1 \, dt_2 \\
&\lesssim \int_{\mathbb{R}^{2d}} \int_{\mathbb{R}^2} \frac{|f(z,t_1) - f (z,t_2)|^2}{|t_1 - t_2|^{1+2s}} \, dz \, dt_1 \, dt_2 \\
&= C \|D^s f\|_{L^2(\mathbb{R}^{2d+1})}^2. 
\end{align*}

\begin{align*}
II &= \sum_j \int_{\mathbb{R}^{2d} \times \Omega_j} \left(\int_{\Omega_j}  \frac{|\widetilde{\phi}_j(t_1) - \widetilde{\phi}_j (t_2)|^2}{|t_1 - t_2|^{1+2s}}  \, dt_1\right) |f(z,t_2)|^2 \, dz \, dt_2 \\
&\lesssim \sum_j 2^{2js} \int_{\mathbb{R}^{2d} \times \Omega_j} |f(z,t_2)|^2 \, dz \, dt_2 \\
&\lesssim \int_{\mathbb{R}^{2d} \times \mathbb{R} \setminus \{0\}} |f(z,t)|^2 \frac{1}{|t|^{2s}} \, dz \, dt. 
\end{align*}

Here we have used the estimate 
$$\int_{\Omega_j}  \frac{|\widetilde{\phi}_j(t_1) - \widetilde{\phi}_j (t_2)|^2}{|t_1 - t_2|^{1+2s}}  \, dt_1 \lesssim 2^{2js}$$
which holds uniformly for $ t_2 \in \Omega_j$, and the proof of this estimate is 
\begin{align*}
\int_{\Omega_j}  \frac{|\widetilde{\phi}_j(t_1) - \widetilde{\phi}_j (t_2)|^2}{|t_1 - t_2|^{1+2s}}  \, dt_1 &= \int_{\Omega_j} \left| \frac{\widetilde{\phi}_j(t_1) - \widetilde{\phi}_j (t_2)}{t_1 - t_2|}\right|^2 |t_1 - t_2|^{1-2s} \, dt_1 \\
&\lesssim \|\widetilde{\phi}_j^\prime\|^2_\infty \int_{\Omega_j} |t_1 - t_2|^{1-2s} \, dt_1 \\
&\sim 2^{2js}. 
\end{align*}

Recalling that $\Omega_j = (-2^{-j+3}, -2^{-j-3}) \cup (2^{-j-3}, 2^{-j+3}),$ and $\widetilde{\phi}_j$ are supported on $[-2^{-j+2}, -2^{-j-2}] \cup [2^{-j-2}, 2^{-j+2}]$, one can perform calculations similar to the above ones to verify that 
$$\int_{\Omega_j}  \frac{1}{|t_1 - t_2|^{1+2s}}  \, dt_1 \lesssim 2^{2js},$$ 
and thus, 
\begin{align*}
III &\lesssim \sum_j 2^{2js} \int_{\mathbb{R}^{2d} \times \Omega_j} |f(z,t_2)|^2 \, dz \, dt_2 \\
&\lesssim \int_{\mathbb{R}^{2d} \times \mathbb{R} \setminus \{0\}} |f(z,t)|^2 \frac{1}{|t|^{2s}} \, dz \, dt. 
\end{align*}

So, we have have shown that for any $0<s<1$,
$\sum_j \|D^s (\widetilde{\phi}_j f)\|_{L^2(\mathbb{R}^{2d+1})}$ is bounded by a multiple of $\|f\|_{s}$. 

Summarizing the above, we have that for any $0<s<1$, 
$$\|f\|_{s,*} \lesssim \|f\|_{s}.$$

For $s=1$, while analysing $\|f\|_{1,*}$, one could simply apply the Leibniz formula for differentiation in the first two terms and then make use of the estimates of $\widetilde{\phi}_j$'s together with the fact that these are supported in $\Omega_j$'s, and the above analysis for the fractional differentiation for the third term to easily verify that 
$$\|f\|_{1,*} \lesssim \|f\|_{1}.$$

On the other hand, for $0<s<1$, 
\begin{align*} 
& \lefteqn{\int_{\mathbb{R}^{2d+1}} \left|D^s f(z,t)\right|^2  \, dz \, dt } \\
\begin{split}
& \quad = \int_{\mathbb{R}^{2d}} \int_{\mathbb{R}^2} \frac{|f(z,t_1) - f(z,t_2)|^2}{|t_1 - t_2|^{1+2s}} \, dz \, dt_1 \, dt_2 \\ 
& \quad \lesssim \sum_{m} \sum_{l \geq m} \int_{\mathbb{R}^{2d}} \int_{\Omega_m \times \Omega_l} \frac{|\sum_j (f \widetilde{\phi}_j)(z,t_1) - \sum_k (f \widetilde{\phi}_k)(z,t_2)|^2}{|t_1 - t_2|^{1+2s}} \, dz \, dt_1 \, dt_2 \\
& \quad = \sum_{m} \sum_{l \geq m} \int_{\mathbb{R}^{2d}} \int_{\Omega_m \times \Omega_l} \frac{|\sum_{j=m-3}^{m+3} (f \widetilde{\phi}_j)(z,t_1) - \sum_{k=l-3}^{l+3} (f \widetilde{\phi}_k)(z,t_2)|^2}{|t_1 - t_2|^{1+2s}} \, dz \, dt_1 \, dt_2.
\end{split} 
\end{align*}

For each $m$, one could arrange the summand in the above expression in the following manner. For $m \leq l \leq m+9$, write each pair of terms with same index together and the remaining terms separately. For $l > m+9$, there is no common index, and we write each term separately. Finally, one could apply Cauchy Schwarz inequality to verify that the above summation is dominated by 
\begin{align*}
& \sum_{m} \int_{\mathbb{R}^{2d}} \int_{\Omega_m \times \Omega_m} \frac{|(f \widetilde{\phi}_m)(z,t_1) - (f \widetilde{\phi}_m)(z,t_2)|^2}{|t_1 - t_2|^{1+2s}} \, dz \, dt_1 \, dt_2 \\ 
& \quad + \sum_{m} \int_{\mathbb{R}^{2d}} \int_{\Omega_m \times \Omega_m^c} \frac{|(f \widetilde{\phi}_m)(z,t_1)|^2}{|t_1 - t_2|^{1+2s}} \, dz \, dt_1 \, dt_2 
\end{align*}
which is further bounded from above by 
$$\sum_{m} \|D^s(f \widetilde{\phi}_m)\|^2_{L^2(\mathbb{R}^{2d+1})} + \sum_{m} 2^{2ms} \|f \widetilde{\phi}_m\|^2_{L^2(\mathbb{R}^{2d+1})}.$$

This completes the proof of the fact that for $0<s<1$,
$$\|f\|_{s} \lesssim \|f\|_{s,*}.$$

Finally, when $s=1$, one could directly estimate $Df$ in terms of $D(f \widetilde{\phi}_m)$ as follows : 
\begin{align*} 
\left\|\frac{\partial f}{\partial t}\right\|^2_{L^2(\mathbb{R}^{2d+1})} 
&= \int_{\mathbb{R}^{2d+1}} \left|\sum_{m} \frac{\partial (f \widetilde{\phi}_m)}{\partial t} (z,t)\right|^2  \, dz \, dt \\
&\lesssim \sum_{m} \int_{\Omega_m} \left|\frac{\partial (f \widetilde{\phi}_m)}{\partial t} (z,t)\right|^2  \, dz \, dt \\
&\leq \sum_{m} \left\|\frac{\partial (f \widetilde{\phi}_m)}{\partial t}\right\|^2_{L^2(\mathbb{R}^{2d+1})} .
\end{align*}
Hence, for all $0 \leq s \leq 1$, the two spaces $(A_s, \|\cdot\|_s)$ and $(W_2^s, \|\cdot\|_{s,*})$ are identical with norm equivalence. For any $0<s<1$, since $W_2^s$ is the complex interpolation of $W_2^0$ and $W_2^1$ (see Page 121, Section 1.18.1 of \cite{HT}), it follows that $A_s$ is the complex interpolation of $A_0$ and $A_1.$ This completes the proof of Theorem \ref{thm:interpolation}.
\end{proof}

\section*{Acknowledgements}
The authors would like to thank the referee for meticulous reading of the manuscript and for valuable suggestions which have greatly helped in improving the presentation of the paper. The second author is very grateful for the kind hospitality provided at IISER Bhopal where this project was initiated. This work was supported in part by the INSPIRE Faculty Award of the first author from the Department of Science and Technology (DST), Government of India. 

\bibliographystyle{amsplain}

\end{document}